\documentclass{amsart} 

\usepackage{amsmath}
\usepackage{epsfig}
\usepackage{color,graphics}
\usepackage{booktabs}
\usepackage{epstopdf}
\usepackage[all]{xy}
\usepackage{fixmath, bm, amssymb, amsfonts, latexsym}

\textheight=21.cm
\textwidth=16.4cm \hoffset -1.90cm \voffset -0.2cm


\newcommand{\ID}{{\rm I\hskip-0.02in D}}

\newtheorem{theorem}{Theorem}[section]
\newtheorem{proposition}[theorem]{Proposition}
\newtheorem{lemma}[theorem]{Lemma}
\newtheorem{corollary}[theorem]{Corollary}

\newtheorem{example}[theorem]{Example}
\newtheorem{remark}[theorem]{Remark}

\newcommand{\baa}{\begin{eqnarray*}}
\newcommand{\eaa}{\end{eqnarray*}}
\newcommand{\ba}{\begin{equation}}
\newcommand{\ea}{\end{equation}}
\newcommand{\ZZ} {\mathbb Z}
\newcommand{\RR}{\mathbb R}
\newcommand{\NN} {\mathbb N}

\newcommand{\BB} {\mathbb B}

\def\xj{x_{j+1/2}}
\def\CE{\mathcal E}
\def\HRule{\rule{\linewidth}{0.1mm}}

\def\bau{{\bar u}}
\def\bax{{\bar x}}

\def\hf{{\hat {f}}}

\def\mO{{\mathcal O}}
\def\mS{{\mathcal S}}

\def\bp{{\bf p}}

\def\br{{\bf r}}

\def\bD{{\bf D}}

\def\bL{{\bf L}}

\def\bR{{\bf R}}
\def\bT{{\bf T}}
\def\bU{{\bf U}}
\def\bV{{\bf V}}
\def\bW{{\bf W}}

\def\vp{\varphi}

\begin{document}
\author{Youngsoo Ha $^{\S}$, Chang Ho Kim$^\natural$, Hyoseon Yang ${}^\dagger$, and Jungho Yoon ${}^\ddagger$}
\title{Improving accuracy of the fifth-order WENO scheme by using the exponential approximation space
}
\thanks{
$^{\S}$ Dept. of Math. Sciences, Seoul National University, Seoul,
S. Korea ({youngamath@snu.ac.kr}), $^\natural$
Dept. of Software Technology, Glocal Campus, Konkuk University,
Chungju, S. Korea (kimchang@kku.ac.kr),
${}^\dagger$ Dept. of Computational Mathematics, Science and Engineering, Michigan State University, East Lansing, MI, USA (hyoseon@msu.edu) , ${}^\ddagger$
Dept. of Math., Ewha Womans University, Seoul, S. Korea (yoon@ewha.ac.kr).}
\date{\today}
 \maketitle


\begin{abstract}
The aim of this study is to develop a novel WENO scheme that improves
the performance of the well-known fifth-order WENO methods.
The approximation space consists of exponential polynomials with
a tension parameter that may be optimized to fit the
the specific feature of the data,
yielding better results compared to the polynomial approximation space.
However, finding an optimal tension parameter is a very important
and difficult problem, indeed a topic of active research.
In this regard, this study introduces a practical approach
to determine an optimal tension parameter by taking into account
the relationship between the tension parameter and the accuracy of
the exponential polynomial interpolation under the setting
of the fifth-order WENO scheme.
As a result, the proposed WENO scheme attains an improved order
of accuracy (that is, sixth-order) better than other fifth-order WENO methods
without loss of accuracy at critical points.
A detailed analysis is provided to verify the improved convergence rate.
Further, we present modified nonlinear weights
based on $L^1$-norm approach along with a new global smoothness indicator.
The proposed nonlinear weights reduce numerical dissipation significantly,
while attaining better resolution in smooth regions.
Some experimental results for various benchmark test problems
are presented to demonstrate the ability
of the new scheme.
\end{abstract}

\bigskip
\noindent 2000 AMS(MOS) Classification:
41A05, 41A10, 42A10, 65M06, 65M15

\medskip
\noindent
Keywords: Hyperbolic conservation laws, WENO scheme, exponential polynomial
interpolation, tension parameter, order of accuracy, smoothness indicator.

\pagestyle{myheadings} \thispagestyle{plain} \markboth{Y. HA, C.KIM,
H. YANG  AND J. YOON}{Improving accuracy of the fifth-WENO scheme}

\section{Introduction}

Hyperbolic systems are used for a wide range of scientific and engineering
applications such as meteorology, gas dynamics,
shallow water modeling, astrophysics models, and multiphase flow problems.
It is well-known that the hyperbolic conservation laws may generate
discontinuities in its solution even though the initial condition is smooth.
Such discontinuities introduce undesirable artifacts
like spurious oscillations in the numerical solutions.
To avoid such phenomena, Total-Variation Diminishing (TVD)
techniques have been developed \cite{Har0, Har2},
but these schemes were revealed to have  at most first-order accuracy.
To overcome of this limitation, a series of essentially non-oscillatory (ENO)
schemes have been developed.
The ENO schemes \cite{Har_osh2, Har4, Har5, shu1, shu2}
are designed to utilize several candidate stencils to avoid cross-shock
interpolation such that they  reduce spurious oscillations
near discontinuities  while achieving high order accuracy on smooth areas.
The main idea of the weighted ENO (WENO) technique is to use
a convex combination of all the candidate stencils of ENO in
a nonlinear fashion and assigns a weight to each local solution
based on its smoothness.

In  \cite{L2}, Liu et al. developed a weighted ENO scheme
of a finite volume version which had the $(r+1)$-th order accuracy
from the $r$th order ENO scheme on smooth regions
using interpolating functions obtained from all candidate stencils
in the ENO method.
Later, Jiang and Shu \cite{Jiang1} (called classical WENO or WENO-JS)
introduced new smoothness indicators that measure the regularities
of local solutions with $L^2$-norm
to obtain the fifth-order accuracy on smooth regions.
Although the WENO-JS possesses the fifth-order convergence rate
in smooth regions,
Henrick et al. noticed \cite{hen}
that it suffers loss of accuracy near the
critical points where the first
and third derivatives do not vanish simultaneously.
To correct this deficiency, the mapped WENO (hereafter, called WENO-M)
scheme was devised in the form of a mapping function on the WENO-JS weights,
leading to the maximal rate of convergence while
achieving improved results near discontinuities \cite{hen}.
Subsequently, Borges et al. \cite{bor} proposed another version of
WENO schemes (called WENO-Z) by adding a new high order reference
smoothness indicator consisting of a linear combination of
the original smoothness indicator of WENO-JS.
The WENO-M and WENO-Z schemes possess good shock capturing abilities,
but both schemes fail to retain maximal order of accuracy near the high-order
critical points \cite{bor}.  Acker et al. \cite{ABC} added a new term
in the smoothness indicator to the fifth order WENO-Z weight
to increase the relevance of less-smooth substencil such that
it achieved better resolution in the smooth part
of the solution while maintaining the same numerical stability
as the original WENO-Z at shocks and discontinuities.
Some other fifth-order WENO schemes were further proposed by
modifying nonlinear weights \cite{GMR, ha1, KHY, Zhu1}.
Sixth or higher order WENO techniques have been developed
in the literature \cite{bal, gerolymos, HKYY, HU1, HU2}.
The central WENO \cite{Cravero, Kaser, Levy99},
hybrid compact WENO schemes \cite{Pirozzoli, Shen},
and other versions of the WENO methods
\cite{bal16, Chen, Levy99, Levy, Liu_wls, Zhang, Zhu2}
have been constructed to improve the performance of the
WENO techniques.



The space of algebraic polynomials is the
most well-established tool to reconstruct numerical flux.
However, the interpolation method
cannot be regulated according to the trait of the given data such that
it causes  excessive numerical dissipation
when approximating rapidly varying  data (e.g.,  sharp gradients
or high oscillations).
To circumvent this limitation, this study exploits the interpolation method
based on the space of exponential polynomials of the form
\baa
\phi(x) =  x^k e^{\lambda x},
\quad k\in\ZZ_+,\ \lambda\in\RR \cup i\RR,
\eaa
that allows  an environment to fit the approximation to the characteristic
of the given  problem.
For a given exponential
polynomial space, the choice of the tension (or shape) parameter $\lambda$
has a significant impact on the accuracy of interpolation.
A well-selected parameter
can yield better results compared to the polynomial-based method
for various types of PDEs \cite{HLY, HKYY, YuS, ZQ1, ZQ2}.
However, selecting an optimal parameter is an
important and difficult problem, indeed a topic of active research.
Most studies end up finding the tension parameter
by using trial and error or minimization problem. 
In this regards, the goal of this study is first to present a specific type
of exponential approximation space
for the construction of numerical fluxes under the setting
of the fifth-order WENO scheme. We then introduce a practical approach
to determine an optimal parameter by taking into account
the relation between the value of the tension parameter and the accuracy of
the exponential polynomial interpolation.
As a result, the proposed WENO scheme (termed as WENO-H)
provides an improved  order of accuracy better than the other fifth-order
WENO methods.
In fact, we will observe that the sixth-order accuracy can be achieved
by the WENO-H technique, without loss of accuracy at critical points.
A rigorous analysis is provided to prove the improved convergence rate.
Further, a modified smoothness indicator based on $L^1$-norm approach
is presented along with a new global smoothness indicator.
Accordingly, the proposed WENO scheme reduces  numerical dissipation
significantly, while attaining better resolution in smooth regions.
Some experimental results for various benchmark test problems
are given to illustrate the performance of the WENO-H scheme.
The results are compared with those of
some other methods to confirm the reliability of the proposed method.

The organization of the paper is as follows. Section 2
gives a brief review of the fifth-order WENO schemes for
one-dimensional scalar conservation laws.
In section 3, we propose a specific type
of exponential approximation space
and a practical approach to determine the parameter
under the setting of the fifth-order WENO scheme.
We also give a detailed analysis for the improved order of accuracy
under a suitable condition of the tension parameter.
In section 4, we introduce new modified smoothness indicators
along with the associated WENO scheme.
Finally, section 5 presents  some experimental results
to demonstrate the performance of the WENO-H.
A conclusion is given in section 6.

\section{\bf WENO schemes}\label{SEC-WENO}

In this section we describe a general formulation of finite difference
WENO schemes for solving hyperbolic conservation laws.
Without loss of generality,
we shall focus on the one-dimensional hyperbolic conservation
laws which is given in the form
\begin{align}\label{H-EQ-1}
\begin{split}
 & q_t + f(q)_x  = 0,\quad   t\geq 0, \ x\in \RR,\\
 &     q(x,0)    = q_0(x),
\end{split}
\end{align}
with suitable boundary conditions.
Here, $q = (q_1, \cdots, q_m)$ is a vector of conserved quantities,
$f(q)$ is a vector-valued function with $m$ components,
and $x$ and $t$ indicate space and time variables respectively.

For simplicity of our presentation, we introduce some notation.
The computational domain is assumed to be uniformly distributed with
the cells $I_j=[x_{j-1/2}, x_{j+1/2}] $ and their centers $x_j$.
The points $\{x_{j+1/2}\}$ are called the cell boundaries
and the cell size is denoted by
$\Delta x = x_{j+1/2} - x_{j-1/2}$.
In particular, we use the notation $f_j$ for the function value
at the node $x_j$, i.e., $f_j:=f(x_j)$.
The set of nonnegative integers is denoted by $\ZZ_+$, i.e.,
$\ZZ_+ =\{0\}\cup\NN$.

\subsection{Formulation of WENO scheme}
At each node $x_j$, the semi-discretized form  of the equation
in \eqref{H-EQ-1} generates a system of ODE
(ordinary differential equation) by the method of lines:
\begin{equation}\label{SEMI-DIS}
      \frac{d q_j}{d t} = - \frac{\partial f}{\partial x} \Big |_{x =x_j}
\end{equation}
with $q_j(t)$ an approximate value to
the value $q(x_j,t)$ in a grid.
Defining
the flux function  $h$ implicitly by
\begin{eqnarray} \label{CELL-AVG}
    f(q(x))= \frac{1}{\Delta x}
       \int^{x+\Delta x/2}_{x-\Delta x/2} h(s)ds,
\end{eqnarray}
a conservative finite difference formulation constructs a numerical flux
$\hat f$ which approximates the function $h$ at the cell boundaries
with a high order of accuracy.
Therefore, the spatial derivative $\frac{\partial f}{\partial x}|_{x=x_j}$
in  \eqref{SEMI-DIS}
can be represented as a discrete difference of the function $h$ at
the cell boundary $x_{j+1/2}$, which also can be
exactly approximated by the following conservative scheme
\begin{align}\label{H-EQ-A}
       \frac{\partial f}{\partial x} \Big |_{x =x_j}
        & = \frac{h_{j+1/2} - h_{j-1/2}}{\Delta x}.
\end{align}
The interface numerical flux $\hat f$ can be computed by
\begin{equation} \label{num-flux}
\hat {f}_{j+1/2} = Q(f^+_{j+1/2}, f^-_{j+1/2})
\end{equation}
where $Q$ indicates a flux method.
In practice, in order to ensure the numerical stability
and avoid entropy violating solutions,
the flux $f$ is split into two components $f^+$ and
$f^-$ as $f(q) = f^+(q) + f^-(q)$.
The interface limits
$ f^-$ and $f^+ $ are
obtained by negative and positive parts of the flux $f(q)$, respectively.
This study employs the Lax-Friedrichs splitting
defined  by
\begin{equation}\label{gm10}
        f^{\pm}(q) = \frac{1}{2} \big{(}f(q) \pm \alpha q \big{)},
\end{equation}
where
$f^+$ and $f^-$ indicate the approximations to $f$ from right and left
respectively and $\alpha = \max_q |f'(q)|$ on the pertinent range of $q$.

\subsection{Fifth-order WENO schemes}


In the fifth-order WENO finite difference scheme,
the numerical flux $\hat{f}$ at the cell boundary $\xj$ in \eqref{num-flux}
is constructed on a $5$-point stencil
$$\mS_5:= \mS(j):=\{x_{j-2}, x_{j-1}, x_j, x_{j+1}, x_{j+2} \}$$
which is subdivided into three candidate substencils
$\mS_k:=\{x_{j+k-2}, \ldots, x_{j+k}\}$, $k= 0,1,2$.
Letting $\hat{f}^{k}_{j + 1/2}$
be the local solution constructed on each substencil $S_k$,
the final WENO approximation is defined by a convex combination of these
functions with weights $\omega_k$:
\begin{equation*}
\hat{f}_{j+ 1/2} = \sum_{k=0}^2 \omega_k \hat{f}^{k}_{j+ 1/2}.
\end{equation*}
To construct the weights $\omega_k$,
we first find the constants $d_k$ which are called optimal (or ideal)
weights such that its linear
combination of $\hat{f}^{k}_{j+ 1/2}$ results in the central upwind
fifth-order scheme  to $h_{j+ 1/2}$.
The specific values of $d_k$ are known as $d_0=0.1$, $d_1=0.6$ and $d_2=0.3$
\cite{shu01}.
Then the  nonlinear weights $\omega_k$ are defined by
using these numbers $d_k$ as follows:
\begin{align*}
\omega_k = \frac{\alpha_k}{\sum^{2}_{\ell=0} \alpha_\ell}, \quad
\alpha_k = \frac{d_k}{(\varepsilon + \beta_k)^2},~
\end{align*}
where a small positive value $\varepsilon>0$ is employed to
prevent the division by zero
and $\omega_0+\omega_1+\omega_2 =1$.
The local smoothness indicator $\beta_k$  estimates the
regularity of the numerical flux $\hat f^k$ which indeed determines
to what extent the solution $\hf^k$ contributes
to the final WENO reconstruction.
The smoothness indicators introduced by Jiang and Shu \cite{Jiang1} are given by
\begin{equation}\label{JS-IND}
\beta_k = \sum^{2}_{\ell=1} \int^{x_{j+1/2}}_{x_{j-1/2}} \Delta
x^{2\ell-1} \Big{(}\frac{d^\ell}{dx^\ell}\hat{f}^{k}\Big{)}^2dx.
\end{equation}
The  scheme is called WENO-JS.
It was noted that the WENO-JS achieves only  the third order accuracy
at critical points. To correct this drawback,
two different WENO techniques have been developed.
Henrick et al. \cite{hen} suggested a modified fifth-order WENO method
(WENO-M)  by using a mapping procedure to the smoothness indicators
to recover the maximal convergence rate  (\ref{JS-IND}).
Later, Borges et al. \cite{bor} introduced
another approach for the WENO scheme (referred to as WENO-Z) by
using a global high order smoothness indicator
which makes the nonlinear weights converge to the optimal weights
faster than the classical WENO scheme.

\section{\bf Interpolation based on exponential polynomial basis functions}



\subsection{\bf Exponential Function Space}

Although the space of polynomials is most commonly used to implement numerical
fluxes, the interpolation method causes excessive numerical dissipation
when approximating rapidly varying data. 
In order to make up for this weakness, we employ a method based on
exponential polynomials of the form
\ba \label{E-PHI}
\phi(x) =  x^k e^{\lambda x}, \quad k\in\ZZ_+,
\quad \lambda\in\RR \cup i\RR.
\ea
If $k=0$ and $\lambda$ is pure imaginary, the function $\phi$ becomes
a trigonometric polynomial.
The motivation of using this type of functions is to exploit $\lambda$
as a tension  parameter so that
it allows one to choose an optimized  parameter
to fit the specific features of the solution.

Let $\BB_m:=\{\phi_1,\ldots,\phi_m\}$ with $m\in\NN$ be a set of
exponential polynomials. When the set $\BB_m$  constitutes
an {\em extended Tchebysheff system} on $\RR$,
the non-singularity of the interpolation matrix is guaranteed \cite{KS}.
Practically, for a given cell boundary $\xj$,
we look for the approximate solution from the shifted function space
\ba\label{SP-E}
\Gamma_m :={\rm span} \{\phi_n(\cdot -\xj):\phi_n\in \BB_m\}
\ea
to avoid using large numbers in the interpolation matrix.
The construction of the numerical flux $\hf$ based on $\BB_5$
complies with the methodology of the central-upwind schemes.
We use an $m$-point stencil to construct $\hat f$
approximating the flux $h$ with the $m$th convergence order
at the cell interface.
That is,  from a given set of cell-average values on the stencil,
the function $\hf$ is defined as follows
\ba\label{EP-Central}
h \approx \hat f =\sum_{n=1}^m a_n \phi_n(\cdot-\xj) \in \Gamma_m
\ea
with the coefficients $a_k$ obtained by evaluating the integral at
the stencil nodes \cite{HKYY, hen}.
Equivalently, a convenient way to construct the numerical flux $\hf$ is via
Lagrange's interpolation formula
to the primitive function $H$ of $h$ on the cell-boundaries (say
$\mS_m^b:=\{x_{j-r-1/2}\ldots,x_{j-r+m-1/2}\}$ for some $r\in\ZZ_+$),
that is,
\baa
\hf_{j+1/2}
:=\sum_{n=0}^m L_n^\prime(\xj ) H(x_{j-r+n-1/2}).
\eaa
In actual computation, the function $H$ need not to be computed explicitly.
The values of $H$ at the cell boundaries can be computed directly
by using the given cell-average values.
Letting $\BB^b_m=\{\vp_0,\ldots, \vp_m\}$
be a set of exponential polynomials such that
$\Gamma_m= {\rm span} \{\vp_k'(\cdot-\xj): \vp_k\in \BB^b_m \}$,
the Lagrange functions $L_n$ are in fact determined by solving
the linear system
\ba\label{L-Rep}
\sum_{n=0}^{m}
L_n(x) \vp_\ell(x_{j-r+\ell -1/2}-\xj)=\vp_\ell(x),
\quad \forall \vp_\ell\in \BB^b_m,
\ea
which means the exponential polynomial reproducing property of $\{L_n\}$.
It is obvious that each $L'_n$ belongs to the space  $\Gamma_m$
for $n=0,\ldots, m$.

\begin{remark}\label{REM-LAG}
{\rm
When $\vp_n(x) = x^n$ for $n=0,\ldots, m$,
the solution of the linear system in \eqref{L-Rep} is
uniquely determined by the set of the Lagrange polynomials
(denoted by $\{u_n:n=0,\ldots, m\}$)
of degree $m$ on $S^b_m$ which
fulfills the polynomial reproducing property
\ba\label{L-POL}
\sum_{n=0}^m u^{(\alpha)}_n(x) p(x_{j-r+n-1/2}) = p^{(\alpha)}(x),
\quad \forall p\in\Pi_m.
\ea
For later use,  we introduce the dilation of $u_n$ that is,
$\bau_n: = u_n(\Delta x \cdot)$, which are
the Lagrange polynomials on the stencil $\{-r-\frac 12,\ldots, -r+m-\frac 12\}$.
It is necessary to remark that the Lagrange polynomials are shift-invariant
so that $\bau_n(1/2) = u_n(\xj)$.
}
\end{remark}

The relation between $L_n$ and $u_n$ is treated in the following
Lemma \ref{AS-EQ}, which is useful for our further analysis.
In fact, the specific proof can be obtained similarly as in the proof of
\cite[Theorem 3]{HLY}.
But, in order to make this paper self-contained, the proof is
sketched briefly here.

\begin{lemma}\label{AS-EQ}
Let ${\bL}(x) =(L_n(x):n=0,\ldots,m)$ and ${\bU}(x) =(u_n(x):n=0,\ldots,m)$
be the vectors of Lagrange functions in \eqref{L-Rep} and \eqref{L-POL}
respectively. Then, for any $\alpha=0,\ldots, m-1$, we have
$$\|{\bL}^{(\alpha)}(\xj)  - {\bU}^{(\alpha)}(\xj) \|_\infty = \mO(\Delta x).$$
\end{lemma}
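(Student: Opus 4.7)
The plan is to express both sets of Lagrange functions as solutions of linear interpolation systems, then use Taylor expansion of the exponential basis together with a matrix perturbation argument to compare the two systems.

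First I would write both \eqref{L-Rep} and \eqref{L-POL} in matrix form. Pass to the local variable $y=x-\xj$ and set $y_n := x_{j-r+n-1/2}-\xj = \Delta x\,\xi_n$ with the fixed nodes $\xi_n = n-r-\tfrac12$ independent of $\Delta x$. Define the interpolation matrix $A=A(\Delta x)$ with entries $A_{\ell n}=\vp_\ell(y_n)$ and its polynomial counterpart $B=B(\Delta x)$ with entries $B_{\ell n}=y_n^{\ell}/\ell!$ (after choosing $p_\ell(y)=y^\ell/\ell!$ as a polynomial basis). Evaluating $\bL^{(\alpha)}(\xj)$ and $\bU^{(\alpha)}(\xj)$ at a node then amounts to reading off a particular column of $A^{-1}$ and $B^{-1}$ respectively, up to a factorial factor, because the $\alpha$-th derivatives at $y=0$ of the right-hand-side basis are of the form $\delta_{\ell\alpha}$.

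Second, I would perform a hierarchical change of basis in $\BB^b_m$. Since $\BB^b_m$ has $m+1$ elements and imposing $m+1$ Taylor-coefficient conditions at $0$ gives a nonsingular linear system on $\RR^{m+1}$, one may assume without loss of generality that
\begin{equation*}
\vp_\ell(y) = \frac{y^\ell}{\ell!} + R_\ell(y),\qquad R_\ell(y)=\mO(y^{m+1})\ \text{as}\ y\to 0.
\end{equation*}
Evaluating at $y_n=\Delta x\,\xi_n$ therefore gives $A=B+E$ with $E_{\ell n}=R_\ell(\Delta x\,\xi_n)=\mO(\Delta x^{m+1})$. Note that $B$ factorises as $B=D V$ with $D=\mathrm{diag}(1,\Delta x,\dots,\Delta x^m)$ and $V_{\ell n}=\xi_n^\ell/\ell!$ independent of $\Delta x$.

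Third, I would apply the Neumann expansion $A^{-1}=B^{-1}-B^{-1}E B^{-1}+\mO((B^{-1}E)^2 B^{-1})$ and estimate the leading correction entry $(B^{-1}EB^{-1})_{n\alpha}$. Combining $E_{\ell n}=\mO(\Delta x^{m+1})$ with the Vandermonde scaling $(B^{-1})_{n\ell}=\mO(\Delta x^{-\ell})$, and then using the polynomial reproducing identity $\sum_{n'}(V^{-1})_{n'\alpha}\xi_{n'}^{k}=k!\,\delta_{k\alpha}$ for $k\le m$, one shows that the leading contributions are cancelled by the Lagrange structure of $V^{-1}$ and only the $k\ge m+1$ Taylor tail of $R_\ell$ survives, producing the desired $\mO(\Delta x)$ bound on $(A^{-1}-B^{-1})_{n\alpha}$ and hence on $\|\bL^{(\alpha)}(\xj)-\bU^{(\alpha)}(\xj)\|_\infty$.

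The main obstacle is the delicate bookkeeping of powers of $\Delta x$: the individual quantities $L_n^{(\alpha)}(\xj)$ and $u_n^{(\alpha)}(\xj)$ both grow like $\Delta x^{-\alpha}$, so one must track that the leading terms cancel to order $\Delta x^{m+1-\alpha}$ rather than merely being bounded. Exploiting the full hierarchical Taylor matching of the two bases (not just the leading monomial) together with the exact reproducing identities satisfied by $V^{-1}$ is what makes this cancellation go through; higher Neumann terms $(B^{-1}E)^2B^{-1}$ are automatically smaller by a factor of $\Delta x^{m+1}$ and contribute only lower-order corrections. This is the step where the adaptation of the argument of \cite{HLY} really lives.
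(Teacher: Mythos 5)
Your proposal follows essentially the same route as the paper's own proof: both reduce \eqref{L-Rep} and \eqref{L-POL} to interpolation linear systems, Taylor-expand the exponential basis about $\xj$ so that the system matrix becomes (Wronskian)$\times$(diagonal $\Delta x$-scaling)$\times$(fixed Vandermonde) plus an $\mO(\Delta x^{m+1})$ remainder, and conclude by a matrix-perturbation (Neumann / perturbed-inverse) argument. Your hierarchical normalization $\vp_\ell(y)=y^\ell/\ell!+\mO(y^{m+1})$ is precisely the paper's factoring out of the Wronskian matrix $\bW$, and your $B=DV$ is the paper's $\bD\cdot\bV$; you merely track the $\Delta x^{-\ell}$ scalings more explicitly than the paper does.
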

\begin{proof}
For notational simplicity, put $\bax =\xj$.
Let $T_{\vp_k}$ be the Taylor polynomial of $\vp_k(\cdot -\bax)$
up to degree $m$ around $\bax$, i.e.,
$$T_{\vp_k}=\sum_{\ell =0}^m(\cdot-\bax)^{\ell}\phi^{(\ell )}_k(0)/\ell !,$$
and let $\bT$ be the matrix with components
$\bT(k,n)= T_{\vp_k}(x_{j-r+n-1/2})$
for $k,n =0,\ldots, m$.
Further, letting $\bD$ be the diagonal matrix with the entries
$\bD={\rm diag}( \Delta x^k: k=0,\ldots, m)$,
the matrix $\bT$   can be written as
$$ \bT = \bW\cdot\bD\cdot \bV$$
with
$${\bf W} = (\vp_k^{(\ell)}(0): k, \ell=0,\ldots, m), \quad
\bV= ((-r+n-1)^\ell/\ell!:\ell, n=0,\dots m).$$
Here $\bV$ is a Vandermonde matrix and
$\bW$ is the Wronskian matrix of $\BB_5$ so that their non-singularities
are clear.
Using this expression,
the linear system in (\ref{L-Rep})
which in fact uniquely determines
the solution ${\bf L}(\bax)$
can be decomposed into the form
$$ (\bV+ \Delta x \bR) {\bf L}^{(\alpha)}(\bax)
  = \bp^{(\alpha)}(\bax) + \Delta x {\br}^{(\alpha)}(\bax)$$
with
${\bp}^{(\alpha)}(\bax) =(\delta_{\alpha,n}: n=0,\ldots, m)^T$
for some matrices $\bR$ and $\br(x)$ with
$\|\bR \|_\infty, \|\br\|_\infty \leq c_1<\infty$.
It is well-known (e.g., see \cite{GL})
that a $\mO(\Delta x)$ perturbation of
a non-singular matrix results in also the $\mO(\Delta x)$ perturbation of
its inverse matrix.
Thus, it follows that
\begin{align*}
\begin{split}
 {\bL}^{(\alpha)}(\bax)
&= (\bV+ \Delta x \bR)^{-1}(\bp^{(\alpha)}(\bax)
       +\Delta x {\br}^{(\alpha)}(\bax)) \\
&= (\bV^{-1}+ \Delta x \widetilde {\bR})
   (\bp^{(\alpha)}(\bax)+\Delta x {\br}^{(\alpha)}(\bax)) \\
&= \bV^{-1} \bp^{(\alpha)}(\bax) + \mO(\Delta x).
\end{split}
\end{align*}
In view of \eqref{L-POL},
$\bV^{-1} \bp^{(\alpha)}(\bax)={\bU}^{(\alpha)}(\bax)$. It leads to
$\|{\bL}^{(\alpha)}(\bax)  - {\bU}^{(\alpha)}(\bax) \|_\infty = \mO(\Delta x)$,
which completes the proof.
\end{proof}

\subsection{\bf Optimal tension parameter}

The goal of this section is two folds. We first propose a specific type
of exponential approximation space
for the construction of numerical fluxes under the setting
of the fifth-order WENO scheme.
This study is particularly interested in the following set of functions
\begin{align}\label{E-SP}
\begin{split}
& \BB_5:=\{1,x,x^2, \phi_3(x), \phi_4(x)\},
\end{split}
\end{align}
where $\phi_3$ and $\phi_4$ are exponential polynomials.
In this study, we will mainly concentrate on the case
$$\phi_3(x) = \sinh \lambda x,\quad \phi_4(x)= \cosh \lambda x .$$
As discussed before, for a given cell-boundary $\xj$,
the approximate solution on $\mS_5$ is obtained from the space
$$\Gamma_5={\rm span} \{\phi_i(\cdot -\xj) :\phi_i\in\BB_5\}$$
to avoid using large numbers in interpolation process.
Then our next goal is to present a practical approach to find the
parameter $\lambda$ without any trial and error or minimization process.
For this purpose, we take into account the relation between
the parameter $\lambda$ and
the convergence behavior of the approximation
to the spartial derivative $\partial f/\partial x$ at $x=x_j$
in \eqref{H-EQ-A}, i.e.,
\begin{align}\label{ERR-0}
\CE_j :=
\frac{\partial f}{\partial x}\big |_{x=x_j}
- \frac{\hat{f}_{j+1/2} - \hat{f}_{j-1/2}}{\Delta x}.
\end{align}

\medskip \noindent
Our specific selection of  $\phi_n$ ($n=3,4$) and the associated
tension parameter is presented below
in terms of the primitive function $H$ of the flux $h$.
In actual computation, the values of $H$ at the cell boundaries can be
computed directly by using the given cell-average values:
\ba\label{HXJ}
H(x_{n+1/2}) =\Delta x\sum_{\ell=j-2}^n \bar h_\ell .
\ea
We then  verify that for a suitably chosen parameter,
the corresponding  interpolation method can improve the rate of accuracy of
the classical polynomial interpolation method.

\begin{remark}\label{REM-H6}
{\rm
Prior to further study, it is worthwhile to point out that
if $|H^{(6)}(x)| = 0$
(or practically, $|H^{(6)}(x)| \leq {\bar c} \Delta x^2$
for a fixed constant $c>0$),
the interpolation method provides an improved accuracy of $\CE_j$
for any suitable set $\BB_5$ (including algebraic polynomials);
see Proposition \ref{COR-C1}.  In this case,
one may use the classical interpolation method based on polynomials
to construct $\hf_{j+1/2}$.
In this view point, in what follows, it is reasonable to consider the case
$H^{(6)}(x)\not= 0$.
}
\end{remark}

\medskip \noindent
$\bullet$ {\bf Central Condition A.}
For a given cell boundary $\xj$,
without great loss, we suppose that $H^{(n)}(\xj)$ does not vanish
simultaneously for both $n=4,5$. Then, exponential approximation
space is chosen by considering the following two cases:

\begin{itemize}
\item[{\bf C1:}] If $H^{(4)}(x_{j+1/2})$ is non-zero,
we set
$$ \BB_5=\{1, x, x^2, \sinh \lambda x, \cosh \lambda x\}$$
with the tension parameter  $\lambda$ satisfying the condition
$$\lambda^2 =\Big ( \frac{H^{(6)}}{H^{(4)}}\Big )(x_{j+1/2}) +O(\Delta x).$$
\noindent
In practice, as long as the flux $f$ is not constant or linear
(more generally, polynomially changing) around $\xj$,
$H^{(4)}$ is nonzero almost everywhere.
Hence, in this study, we are mainly concentrating on the case C1.
But, if this is not the case, it is treated by the case C2.

\item [{\bf C2:}]
If $H^{(4)}(x_{j+1/2})=0$ and  $H^{(5)}(\xj)$ is nonzero,
we set
$$\BB_5=\Big \{1, x, x^2, \sinh \lambda x,
\cosh \lambda x+\frac{\lambda^6x^5}{5!}\Big \}$$
with the tension parameter $\lambda$ satisfying the condition
$$\lambda^2 =\Big ( \frac{H^{(6)}}{H^{(5)}}\Big )(x_{j+1/2}) +O(\Delta x).$$
\end{itemize}

\begin{remark}\label{REM-H6-2}
{\rm
For the construction of local numerical flux on each substencil $\mS_k$ for
$k=0,1,2$, we use the algebraic polynomials, i.e.,
$$\BB_3=\{1,x,x^2\}.$$
It means that the reconstruction of the local solution on
each substencil $S_k$ is exactly the same as the case of
the classical fifth-order WENO method.
}
\end{remark}

\subsection{\bf Improved approximation order by exponential polynomials}

We now prove that the proposed interpolation method based
on the `Central Condition A' provides an improved accuracy compared
to other fifth-order WENO schemes.
To do this,
let $\BB^b_5=\{\vp_0,\ldots, \vp_5\}$
be a set of exponential polynomials such that
$\Gamma_5= {\rm span} \{\vp_k'(\cdot-\xj):k=0,\ldots, 5\}$.
Then the numerical flux $\hf$ is defined through
the Lagrangian interpolation formula
to the function $H$ on the cell-boundaries
$\mS_5^b:=\{x_{j-5/2},\ldots,x_{j+5/2}\}$, that is,
\begin{align}\label{Lf-Lag}
\hf_{j+1/2}
:=\sum_{{\color{red} n=0}}^{5} L^\prime_{n}(\xj ) H(x_{n+j-5/2}).
\end{align}
First consider the case `C1'. The case `C2' follows later.



\def\CE{\mathcal E}



\medskip \noindent
$\bullet$ {\bf Case I:} $H^{(4)}(x_{j+1/2})$ is non-zero.

\medskip \noindent
Recalling that $ \BB_5=\{1, x, x^2, \sinh \lambda x, \cosh \lambda x\}$,
let $\BB^b_5=\{\vp_0,\ldots, \vp_5\}$
be a set of exponential polynomials such that
$\Gamma_5= {\rm span} \{\vp_n'(\cdot -\xj):\phi_n\in \BB_5^b\}$.
To facilitate our further analysis for the convergence order
of the proposed method,
we reorganize the elements in $\BB_5^b$ as follows:
\begin{align*}
&\vp_n(x)={x^n}/{n!} \  (n=0,\ldots, 3), \\
&\vp_4(x) =\frac{1}{\lambda^4}
     \Big (\cosh (\lambda x) -1 -\frac{(\lambda x)^2}{2}\Big ),\quad
\vp_5(x)= \frac{1}{\lambda^5}
     \Big (\sinh (\lambda x)  -\lambda x -\frac{(\lambda x)^3}{3!}\Big ).
\end{align*}
It is obvious that  each function
$\vp_n'(\cdot-\xj)$ belongs to the space  $\Gamma_5$.
Then, by a linear combination of these functions,  we define
an auxiliary function  $\psi $ as follows:
\ba\label{PSI}
\psi:= \psi_j := \sum_{n=0}^{5}\mu_{j,n}\vp_n (\cdot-\xj)
\ea
with the coefficient vector ${\mathbold \mu}= (\mu_{j,n}: n =0,\ldots, 5)^T$
obtained by solving the linear system
\ba\label{PSI-2}
\psi^{(n)}(\xj)=H^{(n)} (\xj), \quad n=0,\cdots, 5.
\ea
The following lemma treats the uniqueness of the solution ${\mathbold \mu}$
and also finds its  explicit form.

\begin{lemma}\label{LEM-C1}
Let $\psi$ be defined as  in \eqref{PSI} with
the coefficient vector ${\mathbold \mu}= (\mu_{j,n}: n =0,\ldots, 5)^T$.
Then, there exists a unique solution ${\mathbold \mu}$ with the form
$\mu_{j,n} = H^{(n)}(x_{j+1/2})$ for $n=0,\ldots,5$.
\end{lemma}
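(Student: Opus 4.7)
The plan is to show that the coefficient matrix of the linear system \eqref{PSI-2} for $\mathbold{\mu}$ is literally the identity, so that the formula $\mu_{j,n}=H^{(n)}(\xj)$ drops out immediately and with it both existence and uniqueness. Since $\psi(x)=\sum_{n=0}^{5}\mu_{j,n}\vp_n(x-\xj)$, differentiation and evaluation at $\xj$ give
\[
\psi^{(m)}(\xj)=\sum_{n=0}^{5}\mu_{j,n}\,\vp_n^{(m)}(0),\qquad m=0,\ldots,5,
\]
so the conditions in \eqref{PSI-2} may be recast as $M\,{\mathbold \mu}=\bigl(H^{(m)}(\xj)\bigr)_{m=0}^{5}$ with $M=\bigl[\vp_n^{(m)}(0)\bigr]_{m,n=0}^{5}$.

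The crux is then to verify directly that $M=I_{6\times 6}$. For $n=0,1,2,3$, the monomial pieces $\vp_n(x)=x^n/n!$ plainly satisfy $\vp_n^{(m)}(0)=\delta_{mn}$. For the exponential pieces, the polynomial corrections inside the parentheses in the definitions of $\vp_4$ and $\vp_5$ have been subtracted precisely to strip away the low-order Taylor terms of $\cosh(\lambda x)$ and $\sinh(\lambda x)$, yielding the Maclaurin expansions
\begin{align*}
\vp_4(x) &= \frac{x^4}{4!}+\sum_{k\geq 3}\frac{\lambda^{2k-4}\,x^{2k}}{(2k)!},\\
\vp_5(x) &= \frac{x^5}{5!}+\sum_{k\geq 3}\frac{\lambda^{2k-4}\,x^{2k+1}}{(2k+1)!}.
\end{align*}
These Maclaurin series have lowest-degree monomials $x^4/4!$ and $x^5/5!$ respectively, and all the other monomials appearing are of degree $\geq 6$; reading off the coefficients then yields $\vp_4^{(m)}(0)=\delta_{m,4}$ and $\vp_5^{(m)}(0)=\delta_{m,5}$ for every $m=0,\ldots,5$.

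Combining these two calculations gives $M=I_{6\times 6}$, so \eqref{PSI-2} collapses to $\mu_{j,m}=H^{(m)}(\xj)$ for $m=0,\ldots,5$, which is exactly the claim of the lemma. The argument presents no real obstacle: its entire content is the recognition that the authors' particular choice of $\vp_4$ and $\vp_5$ is arranged so that the basis $\{\vp_0,\ldots,\vp_5\}$ is precisely dual to the Taylor evaluation functionals $H\mapsto H^{(m)}(\xj)$ at the cell boundary.
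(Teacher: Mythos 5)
Your proof is correct and follows essentially the same route as the paper's: both reduce the system \eqref{PSI-2} to $\bW_0{\mathbold\mu}={\bf H}_j$ with $\bW_0$ the Wronskian-type matrix $\bigl(\vp_n^{(m)}(0)\bigr)$ and then observe that this matrix is the identity. You merely spell out the Maclaurin-series computation that the paper leaves as "an elementary calculation," which is a welcome bit of extra detail but not a different argument.
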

\begin{proof}
Let ${\bf W}_0:=(\vp^{(\ell)}_{n}(0): \ell,n = 0,\dots,5)$
be the Wronskian matrix of $\{\vp_0,\ldots, \vp_5\}$ at $0$
and let
${\bf H}_j:=  (H^{(\ell)}(\xj): n = 0,\dots,5) $.
Note that the vector ${\mathbold \mu }$
can be rewritten in the following matrix form
$$ {\bf W}_0\cdot{\mathbold \mu }= {\bf H}_j.$$
Since ${\bf W}_0$ is non-singular, the uniqueness of the solution
${\mathbold \mu}$ is obvious. In fact, an elementary calculation reveals that
$\vp^{(\ell)}_n=\delta_{\ell,n}$ with $\delta_{\ell,n}$ the Kronecker delta,
which means that ${\bf W}_0$ the identity matrix.  Thus,
the lemma is proved immediately.
\end{proof}

We now prove the convergence order of the approximation
to the spatial derivative ${\partial f}/{\partial x}$ in \eqref{ERR-0}.
This study is especially interested in approximating functions $g$
in the Sobolev space
\begin{eqnarray*}
	W_\infty^k(\Omega):=\big \{g\in C^{(k)}(\Omega) :
	\|g^{(k)}\|_{L^\infty(\Omega)} < \infty \}
\end{eqnarray*}
where  $\Omega$ is an open set in $\RR$.
For this proof, we recall that $g_j$ indicates the value $g(x_j)$
at the node $x_j$.
Also, denote by $T_{g}$ the Taylor polynomial of degree $5$
around $x_{j+1/2}$ of the function $g$, i.e.,
\ba\label{T-P-10}
T_g:=T_{g,\xj}: =\sum_{n=0}^5 (\cdot -\xj)^n g^{(n)}(\xj).
\ea

\begin{theorem} \label{TH-A-1}
Assume that $H\in W_\infty^{7}(\Omega)$ with $\Omega$ an open
neighborhood  of $\xj$.
Let $\hf$ be the numerical flux  defined as in \eqref{Lf-Lag}.
Then,  under the `Central Condition A-C1', we have
\ba\label{TH-EQ-0}
h(x_{j\pm 1/2})- \hat{f}(x_{j\pm 1/2})
= C_j \Delta x^{6} + \mO(\Delta x^7)
\ea
with
\ba\label{CJ-0}
C_j= \Big (\frac{H^{(5)} H^{(6)}}{H^{(4)}} - H^{(7)}\Big )(x_{j})
     \sum_{n=0}^5 \bau'_n(1/2){(n-3)^7}/7!.
\ea
where $\bau_n$ are the Lagrange polynomials of degree $5$ on
the stencil $\{-5/2,\ldots, 5/2\}$.
\end{theorem}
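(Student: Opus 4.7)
The plan is to introduce the auxiliary function $\psi$ of \eqref{PSI}--\eqref{PSI-2} and pair it with the exponential-reproducing identity satisfied by the Lagrange weights $\{L_n\}$. By Lemma~\ref{LEM-C1} one has $\psi(y) = \sum_{n=0}^{5} H^{(n)}(\bar{x})\,\varphi_n(y - \bar{x})$, which lies in the span of $\{\varphi_n(\cdot-\bar{x})\}$ and matches $H$ together with its first five derivatives at $\bar{x} := x_{j+1/2}$. Differentiating the reproducing identity~\eqref{L-Rep} in $x$, evaluating at $\bar{x}$, and then summing against the coefficients of $\psi$ gives
\[
\sum_{n=0}^{5} L'_n(\bar{x})\,\psi(x_{n+j-5/2}) \;=\; \psi'(\bar{x}) \;=\; H'(\bar{x}) \;=\; h(\bar{x}).
\]
Subtracting \eqref{Lf-Lag} yields the workhorse error representation
\[
h(\bar{x}) - \hat{f}(\bar{x}) \;=\; \sum_{n=0}^{5} L'_n(\bar{x}) \bigl[ \psi(x_{n+j-5/2}) - H(x_{n+j-5/2}) \bigr],
\]
so the theorem reduces to a Taylor expansion of the residual $\psi - H$ about $\bar{x}$.

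Next I would compute this residual. Since $\psi$ agrees with $H$ through order five at $\bar{x}$, the expansion starts at order six. Using the Maclaurin series of $\sinh$ and $\cosh$ one checks $\varphi_4^{(6)}(0) = \lambda^2$ and $\varphi_5^{(7)}(0) = \lambda^2$, while all other sixth and seventh derivatives of the $\varphi_n$ vanish at $0$, giving $\psi^{(6)}(\bar{x}) = \lambda^2 H^{(4)}(\bar{x})$ and $\psi^{(7)}(\bar{x}) = \lambda^2 H^{(5)}(\bar{x})$. Writing $z_n := x_{n+j-5/2} - \bar{x} = (n-3)\Delta x$,
\[
\psi(x_{n+j-5/2}) - H(x_{n+j-5/2}) = \frac{(\lambda^2 H^{(4)} - H^{(6)})(\bar{x})}{6!}\,z_n^{6} + \frac{(\lambda^2 H^{(5)} - H^{(7)})(\bar{x})}{7!}\,z_n^{7} + \mathcal{O}(\Delta x^{8}).
\]
Lemma~\ref{AS-EQ} together with Remark~\ref{REM-LAG} supplies $L'_n(\bar{x}) = \bar{u}'_n(1/2)/\Delta x + \mathcal{O}(\Delta x)$, which reduces the required weighted sums to $\Delta x^{k-1}\sum_n \bar{u}'_n(1/2)(n-3)^k$ for $k=6,7$.

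Finally, Condition C1, $\lambda^2 = (H^{(6)}/H^{(4)})(\bar{x}) + \mathcal{O}(\Delta x)$, is used to force the $z_n^{6}$ contribution into the remainder $\mathcal{O}(\Delta x^{7})$, after which the surviving leading term is
\[
\frac{(\lambda^2 H^{(5)} - H^{(7)})(\bar{x})}{7!}\,\Delta x^{6}\sum_{n=0}^{5} \bar{u}'_n(1/2)(n-3)^{7}.
\]
A one-step Taylor expansion from $\bar{x}$ to $x_j$ (of size $\Delta x/2$, hence contributing only $\mathcal{O}(\Delta x)$ to the coefficient) and a second use of the tension identity replaces $(\lambda^2 H^{(5)} - H^{(7)})(\bar{x})$ by $(H^{(5)} H^{(6)}/H^{(4)} - H^{(7)})(x_j)$, yielding \eqref{TH-EQ-0}--\eqref{CJ-0} at $x_{j+1/2}$; the case $x_{j-1/2}$ is obtained by an analogous argument on the one-cell-shifted stencil, producing the same $C_j$ up to $\mathcal{O}(\Delta x^{7})$. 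The principal obstacle is the subtle accounting for the sixth-order piece: one must verify that the explicit nonzero constant $\sum_n \bar{u}'_n(1/2)(n-3)^{6}$ multiplied by the $\mathcal{O}(\Delta x)$ error in the tension parameter indeed yields an $\mathcal{O}(\Delta x^{7})$ (and not merely $\mathcal{O}(\Delta x^{6})$) contribution, which forces a careful quantitative reading of Condition C1; everything else is algebraic reorganization resting on the Wronskian identity $\varphi_n^{(\ell)}(0) = \delta_{\ell,n}$ that underpins Lemma~\ref{LEM-C1}.
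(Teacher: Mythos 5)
Your argument is essentially the paper's own proof: the same auxiliary function $\psi$ matching $H$ through fifth order at $x_{j+1/2}$, the same reproducing-property error representation $h_{j+1/2}-\hat f_{j+1/2}=\sum_{n}L_n'(x_{j+1/2})\bigl(\psi-H\bigr)(x_{n+j-5/2})$, the same Taylor-remainder expansion with $\psi^{(6)}(\xj)=\lambda^2H^{(4)}(\xj)$ and $\psi^{(7)}(\xj)=\lambda^2H^{(5)}(\xj)$, the same appeal to Lemma~\ref{AS-EQ}, and the same one-cell shift for $x_{j-1/2}$. The one obstacle you flag is real and is not resolved by a ``careful quantitative reading'' of C1 alone: since $\sum_{n}\bar u_n'(1/2)(n-3)^6=-12\neq 0$, a generic $\mO(\Delta x)$ perturbation of $\lambda^2$ leaves a genuine $\mO(\Delta x^{6})$ contribution from the $z_n^6$ term that would pollute $C_j$; the paper sidesteps this in its proof by taking $\lambda^2=H^{(6)}(x_{j+1/2})/H^{(4)}(x_{j+1/2})$ \emph{exactly} (its equation \eqref{LAM-20}), which annihilates the sixth-order residual identically, and you should do the same (or restrict the slack in C1 to $\mO(\Delta x^{2})$) to obtain the stated constant.
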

\begin{proof}
In this proof, we first analyze the accuracy of $\hat f$ to the function $h$
at $\xj$. To do this, we employ the auxiliary function $\psi$
defined in \eqref{PSI}.  Due to the condition in  \eqref{PSI-2},
$\psi'(x_{j+1/2}) = H'(x_{j+1/2})$. Also, since $H$ is the primitive function
of $h$, $H'(x_{j+1/2}) = h_{j+1/2}$. It implies that
$h_{j+1/2}= \psi'(x_{j+1/2})$. Then using the formula of the numerical flux
$\hat f_{j+/2}$ in \eqref{Lf-Lag}, we can write
\begin{align} \label{ERR-30}
\begin{split}
h_{j+1/2} - \hat{f}_{j+1/2}
& =\psi'(x_{j+1/2}) - \sum_{n=0}^5 L'_n(x_{j+1/2}) H(x_{n+j-5/2}).
\end{split}
\end{align}
Further, since the derivative $\psi'$ belongs to the space $\Gamma_5$,
in view of the exponential polynomial
reproducing property in \eqref{L-Rep}, we can express
\baa
\psi'(x_{j+1/2})
= \sum_{n=0}^5 L'_n(x_{j+1/2})\psi(x_{n+j-5/2}).
\eaa
Combining this with \eqref{ERR-30} derives the equation
\begin{align}\label{ERR-51}
h_{j+1/2} - \hat{f}_{j+1/2}
&= \sum_{n=0 }^5 L'_n(x_{j+1/2})(\psi(x_{n+j-5/2})- H(x_{n+j-5/2})).
\end{align}
Next, to estimate the difference $\psi (x_{n+j-5/2}) - H(x_{n+j-5/2})$
in the above equation, we use the Taylor expansion argument.
In fact, since $\psi^{(\ell)}(x_{j+1/2})=H^{(\ell)}(x_{j+1/2})$
for $\ell=0,\dots, 5$, it is apparent that
$$T_{\psi}=T_H$$
with $T_{g}$ the Taylor polynomial of $g$ in \eqref{T-P-10}.
Accordingly, it holds that
\ba\label{R-EQ}
\psi(x_{n+j-5/2})- H(x_{n+j-5/2})
= R_\psi(x_{n+j-5/2})-R_{H}(x_{n+j-5/2})
\ea
where $R_{g}$ is the remainder of the Taylor polynomial $T_g$.
Then, in order to get an improved convergence rate of
the difference $h_{j+1/2}-\hf_{j+1/2}$,  we would like to verify that
$$\psi^{(6)}_j(x_{j+1/2})=H^{(6)}(x_{j+1/2})$$
under the `Central Condition A-C1'.
Indeed, from the formula of $\psi$ in \eqref{PSI-2} and Lemma \ref{LEM-C1},
a direct calculation yields the identity
$\psi^{(6)}_j(x_{j+1/2}) =\lambda^2  H^{(4)}(x_{j+1/2})$,
where $H^{(4)}(x_{j+1/2})$ is non-zero by assumption.
Thus, putting
\ba\label{LAM-20}
\lambda^2 =  H^{(6)}(x_{j+1/2})/ H^{(4)}(x_{j+1/2}),
\ea
we prove that
$\psi^{(6)}(x_{j+1/2}) = H^{(6)} (x_{j+1/2}).$
Consequently, using the explicit formula of the remainder terms of $\psi$ and
$H$, it holds immediately  from \eqref{R-EQ} that
\begin{align}\label{ERR-52}
\begin{split}
\psi(x_{n+j-5/2})- H(x_{n+j-5/2})
&=\Delta x^7 \frac{(n-3)^7}{7!}
(\psi^{(7)}-H^{(7)})(x_{j+1/2}) + O(\Delta x^8).
\end{split}
\end{align}
Moreover,  from the definition of $\psi$, we calculate that
$\psi^{(7)}(\xj) =\lambda^2 H^{(5)}(\xj)$.
Substituting the value $\lambda$ in \eqref{LAM-20} into this equation
results in the expression
\begin{align*}
\begin{split}
\psi^{(7)}(\xj) =\frac{H^{(5)} H^{(6)}}{H^{(4)}}(\xj).
\end{split}
\end{align*}
Applying the mean-value theorem, it follows that
\begin{align}\label{ERR-53}
(\psi^{(7)}- H^{(7)})(\xj)
=\Big ( \frac{H^{(5)} H^{(6)}}{H^{(4)}}- H^{(7)}\Big )(x_j) + O(\Delta x),
\quad {\rm as}\quad \Delta x \to 0.
\end{align}
On the other hand, let us recall from Lemma \ref{AS-EQ}
that $L'_n (\xj)= u'_n(\xj) +\mO(\Delta x)$ with $u_n$
the Lagrange polynomial of degree $5$ on the stencil $\mS_5^b$
as in \eqref{L-POL}.
Also, $\sum_{n=0}^5 |u_n'(\xj)| \leq c\Delta x^{-1}$.
Combining these arguments with \eqref{ERR-51},
\eqref{ERR-52} and \eqref{ERR-53},
we arrive at the expression
\begin{align*}
h_{j+1/2} - \hat{f}_{j+1/2}
&=\Delta x^7  \sum_{n=0 }^5 u'_n(x_{j+1/2})\frac{(n-3)^7}{7!}
  \Big (\frac{H^{(5)}H^{(6)}}{H^{(4)}} - H^{(7)}\Big )(x_j) + O(\Delta x^7).
\end{align*}
Now, let $\bau_n: = u_n(\Delta x \cdot)$ be the dilation of $u_n$, that is,
the Lagrange polynomials  on the stencil $\{-\frac 52,\ldots, \frac 52\}$
as discussed in Remark \ref{REM-LAG}.
Clearly, $\bau'_n =\Delta x u'_n(\Delta x\cdot ) $ such that
$u'(\xj) = \Delta x^{-1} \bau'_n(1/2)$.
Therefore, we conclude that
\begin{align} \label{ERR-71}
\begin{split}
h_{j+1/2} - \hat{f}_{j+1/2}
& = C_j \Delta x^6  + O(\Delta x^7)
\end{split}
\end{align}
with $C_j$ defined in \eqref{CJ-0},
which is the required result of this theorem.
Moreover,
to estimate $\hat f_{j-1/2}$,
the stencil $\mS_5$ used to compute $\hf_{j+1/2}$ is moved by one-grid
to the left.
Since the Lagrange polynomials are shift-invariant,
we can prove \eqref{TH-EQ-0} by applying the same technique.
The proof is completed.
\end{proof}

\begin{corollary} \label{CO-A-2}
Assume that $H\in W_\infty^{7}(\Omega)$ with $\Omega$ an open
neighborhood  of $\xj$.
Let $\hf$ be the numerical flux  defined as in \eqref{Lf-Lag}.
Then,  under the Central Condition A-C1, we have
\begin{equation*}
\CE_j(f) =
\frac{h_{j+1/2} - h_{j-1/2}}{\Delta x}
- \frac{\hat{f}_{j+1/2} - \hat{f}_{j-1/2}}{\Delta x} = O(\Delta x^{6})
\end{equation*}
\end{corollary}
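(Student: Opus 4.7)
The plan is to derive the corollary directly from Theorem~\ref{TH-A-1}, whose main content is precisely that the pointwise reconstruction error at both cell interfaces shares the \emph{same} leading-order coefficient $C_j$ (evaluated at the node $x_j$) up to a remainder of order $\Delta x^7$. Given that, the telescoping difference used to build the spatial derivative should force the leading term to cancel.

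First I would rewrite
\begin{align*}
\CE_j(f)
&= \frac{h_{j+1/2}-h_{j-1/2}}{\Delta x}
 - \frac{\hat f_{j+1/2}-\hat f_{j-1/2}}{\Delta x}
 = \frac{\bigl(h_{j+1/2}-\hat f_{j+1/2}\bigr) - \bigl(h_{j-1/2}-\hat f_{j-1/2}\bigr)}{\Delta x}.
\end{align*}
Next, I would invoke Theorem~\ref{TH-A-1}, which under Central Condition A-C1 yields
\begin{align*}
h_{j+1/2}-\hat f_{j+1/2} &= C_j\,\Delta x^6 + \mO(\Delta x^7),\\
h_{j-1/2}-\hat f_{j-1/2} &= C_j\,\Delta x^6 + \mO(\Delta x^7),
\end{align*}
with the common constant $C_j=\bigl(H^{(5)}H^{(6)}/H^{(4)} - H^{(7)}\bigr)(x_j)\sum_{n=0}^{5}\bau'_n(1/2)(n-3)^7/7!$. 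Substituting these two expansions into the rewritten formula above, the leading $C_j\,\Delta x^6$ terms cancel and we are left with $\bigl(\mO(\Delta x^7)-\mO(\Delta x^7)\bigr)/\Delta x = \mO(\Delta x^6)$, as claimed.

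The only subtle point — and what I view as the main issue to verify carefully — is that the leading coefficient for the left-interface error is genuinely $C_j$ evaluated at the \emph{same} node $x_j$, even though $\hat f_{j-1/2}$ is reconstructed from the stencil shifted one grid to the left (so that, intrinsically, its natural Taylor center is $x_{j-1/2}$ and its natural leading coefficient is $\bigl(H^{(5)}H^{(6)}/H^{(4)}-H^{(7)}\bigr)(x_{j-1})$ times the same stencil-dependent sum, which is the assertion already encoded in Theorem~\ref{TH-A-1}). A one-term Taylor expansion shows $(H^{(5)}H^{(6)}/H^{(4)}-H^{(7)})(x_{j-1}) = (H^{(5)}H^{(6)}/H^{(4)}-H^{(7)})(x_{j}) + \mO(\Delta x)$, so replacing the left-interface constant by $C_j$ introduces only an $\mO(\Delta x^7)$ perturbation and therefore does not affect the cancellation. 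This absorbs into the remainder and the corollary follows.
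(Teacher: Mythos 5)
Your proposal is correct and follows essentially the same route as the paper: both arguments rely on the fact that Theorem~\ref{TH-A-1} gives the \emph{same} leading coefficient $C_j$ (evaluated at $x_j$) for the errors at both interfaces, so the $C_j\,\Delta x^6$ terms cancel in the difference and the remaining $\mO(\Delta x^7)$ divided by $\Delta x$ yields $\mO(\Delta x^6)$. Your extra check that shifting the stencil to the left perturbs the coefficient only by $\mO(\Delta x)$ is a useful explicit justification of what the paper leaves implicit in the statement of \eqref{TH-EQ-0}.
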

\begin{proof}
The term $C_j \Delta x^6$ in \eqref{TH-EQ-0} is the same
for both $h_{j+1/2} -\hf_{j+ 1/2}$
and $h_{j- 1/2} -\hf_{j- 1/2}$.
Thus,
putting the result of Lemma \eqref{TH-A-1} at the finite difference formula
$\CE_j(f)$ in \eqref{TH-A-1}, we find that  $\Delta x^6$ term remains
after division by $\Delta x$.
Thus, the theorem holds immediately.
\end{proof}

\noindent
$\bullet$ {\bf Case II:} $H^{(4)}(x_{j+1/2})=0$ and
$H^{(5)}(x_{j+1/2})$ is non-zero.

\medskip \noindent
The general approach for this case  is similar
to the Case I, but we have to modify it to meet the condition
$H^{(4)}(x_{j+1/2})=0$ and $H^{(5)}(x_{j+1/2})\not = 0$.
For this purpose,
as before,  we employ an auxiliary function  $\psi $ defined
by a linear combination of the functions in $\BB_5^b$.
As in the case of C1, we reorganize  the elements in
$\BB_5^b$ as follows:
\begin{align*}
&\varphi_n(x) = x^n/ n!,\quad   (n=0,\ldots, 3), \\
&\vp_4(x)=\frac1{\lambda^4}
     \Big (\cosh (\lambda x)-1-\frac{(\lambda x)^2}{2} \Big ), \quad
\vp_5(x) = \frac1{\lambda^5}
\Big (\sinh(\lambda x) - \lambda x - \frac{(\lambda x)^3}{3!} \Big )
  +\frac{\lambda^2 x ^6}{6!}.
\end{align*}
It is not difficult to see that $\vp_n'(\cdot-\xj)\in\Gamma_5$.
Compared to the Case I,
we note that only the function $\vp_5$ is defined differently.
We then introduce an auxiliary $\psi $ by
\ba\label{PSI-20}
\psi := \psi_j (x) := \sum_{n=0}^{5}\mu_{j,n}\vp_n (\cdot-\xj)
\ea
with the coefficient vector ${\mathbold \mu}$
satisfying  the linear system
\ba\label{PSI-LS2}
\psi^{(\ell)}(\xj)=H^{(\ell)} (\xj),\quad \ell=0,\cdots, 5.
\ea
The uniqueness of the solution ${\mathbold \mu}$
and its  explicit form are discussed below.

\begin{lemma}\label{LEM-C2}
Let $\psi$ be defined as  in \eqref{PSI-20} with
the coefficient vector ${\mathbold \mu}= (\mu_{j,n}: n =0,\ldots, 5)^T$.
	Then, there exists a unique solution ${\mathbold \mu}$ with the form
	$\mu_{j,n} = H^{(n)}(x_{j+1/2})$ for $n=0,\ldots,5$.
\end{lemma}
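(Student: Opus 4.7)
The plan is to mirror the argument used in the proof of Lemma~\ref{LEM-C1}. First I would recast the linear system \eqref{PSI-LS2} in matrix form by introducing the Wronskian of the new basis at the origin, ${\bf W}_0:=(\vp_n^{(\ell)}(0):\ell,n=0,\ldots,5)$, and the data vector ${\bf H}_j:=(H^{(\ell)}(\xj):\ell=0,\ldots,5)^T$. Since every $\vp_n$ in \eqref{PSI-20} is evaluated at $x-\xj$, the conditions $\psi^{(\ell)}(\xj)=H^{(\ell)}(\xj)$ become ${\bf W}_0\cdot{\mathbold\mu}={\bf H}_j$. Hence uniqueness of ${\mathbold\mu}$ is equivalent to the non-singularity of ${\bf W}_0$, and the claimed explicit form $\mu_{j,n}=H^{(n)}(\xj)$ is equivalent to ${\bf W}_0=I$.

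The second step is therefore to verify that $\vp_n^{(\ell)}(0)=\delta_{\ell,n}$ for $\ell,n=0,\ldots,5$. For $n=0,\ldots,3$ this is immediate from $\vp_n(x)=x^n/n!$, since $x^n/n!$ has vanishing derivatives of order $\ell<n$ at $0$, derivative equal to $1$ at order $n$, and vanishing derivatives at all higher orders up to $5$. For $\vp_4$ the computation is identical to the one in the proof of Lemma~\ref{LEM-C1}: differentiating $\lambda^{-4}\bigl(\cosh(\lambda x)-1-(\lambda x)^2/2\bigr)$ five times and evaluating at $0$ gives $0$ at orders $0,1,2,3$, then $1$ at order $4$, and $0$ at order $5$. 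The only genuinely new calculation is for $\vp_5$; compared to Case~I, the added correction $\lambda^2 x^6/6!$ has all of its derivatives of order $0,1,\ldots,5$ vanishing at $0$, so it contributes nothing to the relevant entries, while the $\lambda^{-5}\bigl(\sinh(\lambda x)-\lambda x-(\lambda x)^3/3!\bigr)$ part yields $\vp_5^{(\ell)}(0)=\delta_{\ell,5}$ exactly as in Case~I.

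Combining these observations, ${\bf W}_0$ reduces to the identity, so ${\mathbold\mu}={\bf H}_j$ is uniquely determined and $\mu_{j,n}=H^{(n)}(\xj)$ as claimed. The only substantive point is the placement of the correction $\lambda^2 x^6/6!$: it must be of degree at least $6$, for any lower-degree monomial would disturb an off-diagonal entry in the $6\times 6$ block of the Wronskian and destroy the diagonalization; this is precisely why $x^6/6!$ was chosen as the correction in the definition of $\vp_5$ in Case~II. Once this single observation is verified the proof is an almost mechanical transcription of Lemma~\ref{LEM-C1}, which is the main reason I expect no serious obstacle.
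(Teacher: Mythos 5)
Your proposal is correct and follows essentially the same route as the paper: write the interpolation conditions as ${\bf W}_0\cdot{\mathbold\mu}={\bf H}_j$ with ${\bf W}_0$ the Wronskian of $\{\vp_0,\ldots,\vp_5\}$ at $0$, check entrywise that ${\bf W}_0$ is the identity (the correction $\lambda^2x^6/6!$ in $\vp_5$ having all derivatives of order $\le 5$ vanish at the origin), and conclude $\mu_{j,n}=H^{(n)}(\xj)$. Your added observation that the correction must have degree at least $6$ to preserve the identity Wronskian is a nice touch not made explicit in the paper, but the argument is otherwise the same.
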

\begin{proof}
Let ${\bf W}_0:=(\vp^{(\ell)}_{n}(0): \ell,n = 0,\dots,5)$
be the Wronskian matrix of $\{\vp_0,\ldots,\vp_5\}$ at $0$.
It can be easily checked that ${\bf W}_0$ is the identity matrix.
Thus, the same technique in Lemma \ref{LEM-C1} can be applied to
prove ${\mathbold \mu }={\bf H}_j$ with
${\bf H}_j:=  (H^{(\ell)}(\xj): \ell = 0,\dots,5)$.
\end{proof}

\begin{theorem} \label{TH-A}
Assume that $H\in W_\infty^{7}(\Omega)$ with $\Omega$ an open
neighborhood  of $\xj$.
Let $\hf$ be the numerical flux  defined as in \eqref{Lf-Lag}.
Then,  under the Central Condition A-C2, we have
\ba\label{TH-EQ-01}
h(x_{j\pm 1/2})- \hat{f}(x_{j\pm 1/2})
= C_j \Delta x^{6} + \mO(\Delta x^7)
\ea
with
$$C_j= \big (H^{(6)}- H^{(7)}\big )(x_{j})
     \sum_{n=0}^5 \bau'_n(1/2){(n-3)^7}/7!,
$$
where $\bau_n$ are the Lagrange polynomials  on the stencil
$\{-\frac 52,\ldots, \frac 52\}$.
\end{theorem}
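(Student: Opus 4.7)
The plan is to mirror the proof of Theorem \ref{TH-A-1}, replacing the auxiliary function from \eqref{PSI}--\eqref{PSI-2} by the one from \eqref{PSI-20}--\eqref{PSI-LS2}, and verifying that the modified choice of $\vp_5$ in Case C2 still enforces $\psi^{(6)}(\xj)=H^{(6)}(\xj)$ even though $H^{(4)}(\xj)=0$. All other steps carry over verbatim.

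First, I would invoke Lemma \ref{LEM-C2}, which identifies the coefficients of $\psi$ as $\mu_{j,n}=H^{(n)}(\xj)$, so that in particular $\psi^{(\ell)}(\xj)=H^{(\ell)}(\xj)$ for $\ell=0,\ldots,5$, hence $T_\psi=T_H$. Since $\psi'\in\Gamma_5$, the exponential polynomial reproducing property in \eqref{L-Rep} gives $\psi'(\xj)=\sum_{n=0}^{5}L'_n(\xj)\psi(x_{n+j-5/2})$. Combining this with $h_{j+1/2}=H'(\xj)=\psi'(\xj)$ and the definition \eqref{Lf-Lag} of $\hf_{j+1/2}$ yields, exactly as in \eqref{ERR-51},
\begin{equation*}
h_{j+1/2}-\hf_{j+1/2}=\sum_{n=0}^{5}L'_n(\xj)\bigl(\psi(x_{n+j-5/2})-H(x_{n+j-5/2})\bigr)=\sum_{n=0}^{5}L'_n(\xj)\bigl(R_\psi-R_H\bigr)(x_{n+j-5/2}),
\end{equation*}
where $R_g$ denotes the Taylor remainder of $g$ around $\xj$.

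The key computation, and the only genuinely new part compared to Case I, is to evaluate $\psi^{(6)}(\xj)$ and $\psi^{(7)}(\xj)$ for the modified basis. From the power series expansions $\vp_4(x)=x^4/4!+\lambda^2 x^6/6!+\cdots$ and $\vp_5(x)=x^5/5!+\lambda^2 x^6/6!+\lambda^2 x^7/7!+\cdots$ (the extra $\lambda^2 x^6/6!$ in $\vp_5$ is the whole point of the C2 modification), one gets $\vp_4^{(6)}(0)=\lambda^2$, $\vp_4^{(7)}(0)=0$, $\vp_5^{(6)}(0)=\lambda^2$, $\vp_5^{(7)}(0)=\lambda^2$. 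Hence
\begin{equation*}
\psi^{(6)}(\xj)=\lambda^2\bigl(H^{(4)}(\xj)+H^{(5)}(\xj)\bigr),\qquad \psi^{(7)}(\xj)=\lambda^2 H^{(5)}(\xj).
\end{equation*}
Under Central Condition A-C2, $H^{(4)}(\xj)=0$ and $\lambda^2=H^{(6)}(\xj)/H^{(5)}(\xj)+\mO(\Delta x)$, so $\psi^{(6)}(\xj)=H^{(6)}(\xj)+\mO(\Delta x)$ and $\psi^{(7)}(\xj)=H^{(6)}(\xj)+\mO(\Delta x)$. This matches the Taylor coefficient up to sixth order and makes the seventh-order residual explicit.

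With this in hand, I would apply the Taylor remainder formula as in \eqref{ERR-52}, obtaining
\begin{equation*}
\psi(x_{n+j-5/2})-H(x_{n+j-5/2})=\Delta x^7\,\frac{(n-3)^7}{7!}\bigl(\psi^{(7)}-H^{(7)}\bigr)(\xj)+\mO(\Delta x^8),
\end{equation*}
and substitute $(\psi^{(7)}-H^{(7)})(\xj)=(H^{(6)}-H^{(7)})(x_j)+\mO(\Delta x)$ by the mean-value theorem. Finally, Lemma \ref{AS-EQ} gives $L'_n(\xj)=u'_n(\xj)+\mO(\Delta x)$ with $\sum_n|u'_n(\xj)|\le c\Delta x^{-1}$, so rescaling via $\bau_n=u_n(\Delta x\cdot)$ as in Remark \ref{REM-LAG} produces the claimed leading constant $C_j$ and the $\mO(\Delta x^7)$ remainder for $h_{j+1/2}-\hf_{j+1/2}$. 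The estimate at $x_{j-1/2}$ follows by the same argument after shifting the stencil by one cell, using shift-invariance of the Lagrange polynomials.

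The only real obstacle is the Case II bookkeeping in the paragraph above: one must check that the $\lambda^2 x^6/6!$ correction inserted in $\vp_5$ does produce $\vp_5^{(6)}(0)=\lambda^2$ (without disturbing the normalization $\vp_5^{(5)}(0)=1$ needed in Lemma \ref{LEM-C2}), and that the combination of $\lambda^2 H^{(4)}=0$ and $\lambda^2 H^{(5)}=H^{(6)}$ delivers the sixth-order Taylor match. Once these two identities are verified, the rest of the proof is a mechanical transcription of the Case I argument, so I would write it that way rather than repeat every line.
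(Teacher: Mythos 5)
Your proposal is correct and follows essentially the same route as the paper: both proofs reuse the machinery of Theorem \ref{TH-A-1} (the reproducing identity, the Taylor-remainder decomposition, and Lemma \ref{AS-EQ}), with the only new content being the verification that the modified $\vp_5$ yields $\psi^{(6)}(\xj)=\lambda^2(H^{(4)}+H^{(5)})(\xj)=\lambda^2 H^{(5)}(\xj)$ and $\psi^{(7)}(\xj)=\lambda^2 H^{(5)}(\xj)=H^{(6)}(\xj)$ under the choice $\lambda^2=H^{(6)}(\xj)/H^{(5)}(\xj)$. Your explicit power-series bookkeeping for $\vp_4$ and $\vp_5$ is in fact slightly more detailed than the paper's sketch, and the identities you check are exactly the ones the paper relies on.
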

\begin{proof}
The general technique for this proof is similar to that for
Theorem \ref{TH-A-1}.
Therefore, it is sketched here by pointing out
the crucial different parts.
First, since the function $\psi'$ in \eqref{PSI-20} belongs
to the space $\Gamma_5$,
as in the proof of Theorem \ref{TH-A-1}, we can write
\begin{align}\label{ERR-50}
h(x_{j+ 1/2})- \hat{f}(x_{j + 1/2})
&= \sum_{n=0}^5 L'_n(x_{j+1/2})(\psi(x_{n+j-5/2})- H(x_{n+j-5/2})).
\end{align}
Then, to estimate the term $\psi(x_{n+j-5/2})- H(x_{n+j-5/2})$
of the above equation,
we exploit the Taylor expansion argument and the condition
$\psi^{(\ell)}(x_{j+1/2})=H^{(\ell)}(x_{j+1/2})$ with $\ell=0,\dots, 5$
such that it leads to  the expression
\begin{align}\label{ERR-60}
\psi(x_{n+j-5/2})- H(x_{n+j-5/2})
=R_\psi(x_{n+j-5/2})- R_H(x_{n+j-5/2}).
\end{align}
Now, in order to obtain an improved convergence rate in \eqref{ERR-50},
we discuss the condition of the parameter $\lambda$ that makes
$\psi^{(6)}(x_{j+1/2}) = H^{(6)} (x_{j+1/2}).$
Indeed, due to Lemma \ref {LEM-C2} and the condition of $\psi$,
a direct calculation
yields the equation
$$\psi^{(6)}(x_{j+1/2}) = \lambda^2 (H^{(4)}+ H^{(5)})(x_{j+1/2})
                        = \lambda^2  H^{(5)}(x_{j+1/2})$$
because $H^{(4)}(x_{j+1/2})=0$.
By assumption, $H^{(5)}(x_{j+1/2})$ is non-zero. Hence,
putting
\ba\label{LAM-2}
\lambda^2 =  H^{(6)}(x_{j+1/2})/H^{(5)}(x_{j+1/2}),
\ea
induces the equation
$\psi^{(6)}(x_{j+1/2}) = H^{(6)} (x_{j+1/2}).$
Also, using  \eqref{LAM-2} and by the definition of $\psi$,  we obtain
$\psi^{(7)}(x_{j+1/2}) = H^{(6)} (x_{j+1/2})$.
Therefore, following the same techniques in  the proof of Theorem \ref{TH-A},
we can finish the proof.
\end{proof}

As in Corollary \ref{CO-A-2}, we get the following result.

\begin{corollary} \label{TH-A2}
Assume that $H\in W_\infty^{7}(\Omega)$ with $\Omega$ an open
neighborhood  of $\xj$.
Let $\hf$ be the numerical flux  defined as in \eqref{Lf-Lag}.
Then,  under the Central Condition A-C2, we have
\begin{equation*}
\frac{\partial f}{\partial x}\Big |_{x=x_j}
- \frac{\hat{f}_{j+1/2} - \hat{f}_{j-1/2}}{\Delta x}
= O(\Delta x^{6}).
\end{equation*}
\end{corollary}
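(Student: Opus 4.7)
The plan is to repeat, almost verbatim, the short argument used to deduce Corollary \ref{CO-A-2} from Theorem \ref{TH-A-1}, now drawing on Theorem \ref{TH-A} in place of Theorem \ref{TH-A-1}. That is, once the pointwise expansion \eqref{TH-EQ-01} is in hand at both cell interfaces $x_{j+1/2}$ and $x_{j-1/2}$, the corollary follows by a single subtraction and a division by $\Delta x$; no new analytic machinery is required.

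Concretely, I would begin from the exact identity \eqref{H-EQ-A}, which gives
\[
 \frac{\partial f}{\partial x}\Big|_{x=x_j} - \frac{\hat f_{j+1/2}-\hat f_{j-1/2}}{\Delta x}
 = \frac{(h_{j+1/2}-\hat f_{j+1/2}) - (h_{j-1/2}-\hat f_{j-1/2})}{\Delta x}.
\]
Applying \eqref{TH-EQ-01} at the two cell boundaries produces
\[
 h(x_{j\pm 1/2})-\hat f(x_{j\pm 1/2}) = C_j\,\Delta x^{6} + O(\Delta x^{7}),
\]
with the same leading coefficient $C_j$ in both expansions. Subtraction cancels the $C_j\,\Delta x^{6}$ term, leaving an $O(\Delta x^{7})$ residual; dividing by $\Delta x$ yields $\CE_j=O(\Delta x^{6})$, as claimed.

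The only point that warrants a brief justification — and it is the same point already used implicitly in Corollary \ref{CO-A-2} for Case~I — is that the leading constant is genuinely the \emph{same} $C_j$ for both $h_{j+1/2}-\hat f_{j+1/2}$ and $h_{j-1/2}-\hat f_{j-1/2}$, even though the latter is computed on the left-shifted stencil $\mS_5(j-1)$. This is built into the proof of Theorem \ref{TH-A} via two ingredients: the Lagrange polynomials $\bar u_n$ on $\{-5/2,\dots,5/2\}$ are shift-invariant, so the discrete factor $\sum_{n=0}^{5}\bar u'_n(1/2)(n-3)^7/7!$ is identical at both boundaries; and the smooth factor $(H^{(6)}-H^{(7)})$ evaluated at $x_{j-1}$ differs from its value at $x_j$ by $O(\Delta x)$, which contributes only at order $\Delta x^{7}$ and is therefore absorbed into the stated remainder. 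Because the proof reduces to this bookkeeping, there is no real obstacle; the substantive work has been done in Theorem \ref{TH-A}, and the only thing I must be careful about is to confirm that Condition A-C2 is the hypothesis used at \emph{both} of $x_{j\pm 1/2}$ to license the application of \eqref{TH-EQ-01}.
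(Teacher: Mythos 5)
Your argument is correct and is essentially the same as the paper's: the paper likewise deduces the corollary by noting that the leading term $C_j\,\Delta x^{6}$ from Theorem \ref{TH-A} is identical at both interfaces $x_{j\pm 1/2}$, so it cancels in the difference and the $O(\Delta x^{7})$ remainder divided by $\Delta x$ gives the stated $O(\Delta x^{6})$ bound. Your extra remark on shift-invariance of the $\bar u_n$ and the $O(\Delta x)$ variation of the smooth factor is exactly the bookkeeping the paper leaves implicit.
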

As mentioned in Remark \ref{REM-H6},
when $H^{(6)}(\xj)=0$ or  $|H^{(6)}(x_{j\pm 1/2})|\leq {\bar c} \Delta x^2$,
the interpolation method provides an improved accuracy of $\CE_j$
for any choice of $\BB_5$ (including algebraic polynomials).
Next proposition treats this case.

\begin{proposition}\label{COR-C1}
Suppose that $|H^{(6)}(x_{j\pm 1/2})| \leq {\bar c} \Delta x^2$ for a fixed
constant ${\bar c}>0$.
Then for any choice of the set $\BB_5$ in the `Central Condition A' or
$\BB_5=\{1,\ldots, x^4\}$, we have
the estimate $|\CE_j(f) | = \mO(\Delta x^6)$ as $\Delta x \to 0$.
\end{proposition}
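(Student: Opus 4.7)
My plan is to split the argument into two cases based on the choice of $\BB_5$. For the exponential bases (Central Condition A-C1 or A-C2), I would simply invoke Corollaries \ref{CO-A-2} and \ref{TH-A2}, which already deliver $|\CE_j(f)| = \mO(\Delta x^6)$ whenever $H^{(4)}(\xj)$ (respectively $H^{(5)}(\xj)$) is non-vanishing. The extra hypothesis $|H^{(6)}(x_{j\pm 1/2})|\le \bar c\,\Delta x^2$ only forces the tension parameter to satisfy $\lambda^2 = \mO(\Delta x^2)$, which brings the exponential basis closer to the polynomial one but does not disrupt any Taylor-remainder estimate underlying Theorems \ref{TH-A-1} and \ref{TH-A}; hence the exponential cases reduce immediately.

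For the polynomial basis $\BB_5=\{1,x,x^2,x^3,x^4\}$, I would run the framework of Theorem \ref{TH-A-1} with $\psi$ taken to be the degree-$5$ Taylor polynomial of $H$ about $\xj$. Two simplifications occur: Remark \ref{REM-LAG} gives $L_n=u_n$ in the monomial case so that Lemma \ref{AS-EQ} collapses to an identity, and $\psi^{(\ell)}\equiv 0$ for all $\ell\ge 6$. Substituting the Taylor-remainder expansions into the reproduction identity \eqref{ERR-51} then yields
\baa
h_{j+1/2} - \hat f_{j+1/2}
= -\frac{\sigma_6\,\Delta x^5}{720}H^{(6)}(\xj)
  -\frac{\sigma_7\,\Delta x^6}{5040}H^{(7)}(\xj)
  + \mO(\Delta x^7),
\eaa
where $\sigma_k := \sum_{n=0}^{5} \bau'_n(1/2)(n-3)^k$ are shift-invariant constants; an identical expansion (same $\sigma_k$, by translation invariance of the uniform grid) holds for $h_{j-1/2} - \hat f_{j-1/2}$ with derivatives evaluated at $x_{j-1/2}$.

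Subtracting and dividing by $\Delta x$ then gives
\baa
\CE_j(f)
= -\frac{\sigma_6\,\Delta x^4}{720}\big[H^{(6)}(\xj)-H^{(6)}(x_{j-1/2})\big]
  -\frac{\sigma_7\,\Delta x^5}{5040}\big[H^{(7)}(\xj)-H^{(7)}(x_{j-1/2})\big]
  + \mO(\Delta x^6).
\eaa
By the smallness hypothesis each $|H^{(6)}(x_{j\pm 1/2})|$ is at most $\bar c\,\Delta x^2$, so the first bracket is $\mO(\Delta x^2)$ and the corresponding term is $\mO(\Delta x^6)$; Taylor's theorem applied to $H^{(7)}$ on $[x_{j-1/2},\xj]$ bounds the second bracket by $\mO(\Delta x)$, making that term $\mO(\Delta x^6)$ as well. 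Together with the $\mO(\Delta x^6)$ residual this would prove $|\CE_j(f)|=\mO(\Delta x^6)$. The one detail I would need to verify carefully is the shift-invariance of $\sigma_6,\sigma_7$ across the two adjacent cell boundaries used for $\hat f_{j\pm 1/2}$, but this is automatic from the translation covariance of Lagrange polynomials on a uniform grid.
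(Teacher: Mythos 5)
Your proof is correct and follows essentially the same route as the paper's: for the monomial basis, a Taylor-remainder/Lagrange-reproduction expansion in which the leading $\Delta x^{6}$ coefficient is anchored at the common cell centre (so it cancels, up to $\mO(\Delta x^{6})$, in the divided difference), and for the exponential bases a reduction to the already-established sixth-order estimates. The only cosmetic difference is that the paper re-derives the exponential case by checking directly that $|\psi^{(7)}(\xj)|\le c|\lambda|^{2}$ is small under the hypothesis on $H^{(6)}$, whereas you simply invoke Corollaries \ref{CO-A-2} and \ref{TH-A2}; both yield the same conclusion.
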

\begin{proof}
We first consider the case that $\hf_{j+1/2}$ is constructed by using
the classical polynomial interpolation method.
Let $T_{H}$ be the Taylor polynomial of $H$ around $\xj$ of degree $5$
and write $H =  T_{H} +  R_{H}$ with $R_{H}$
the remainder of the Taylor polynomial $T_{H}$.
Then, due to the polynomial reproducing property of the Lagrange polynomials
$\{u_n:n=0,\ldots, 5\}$ in \eqref{L-POL}, we have
\begin{align} \label{ERR-40}
\begin{split}
\hat f_{j+1/2}
&= \sum_{n=0}^5 u'_n(x_{j+1/2}) (T_H + R_H)(x_{n+j-5/2})  \\
&= T'_{H}(x_{j+1/2}) + \sum_{n=0}^5 u'_n(x_{j+1/2})R_{H}(x_{n+j-5/2}).
\end{split}
\end{align}
Obviously, $T_H'(\xj)= H'(\xj)$ and $H'(\xj) = h(\xj)$
because $H$ is the primitive function of $h$.
Also, by assumption, $|H^{(6)}(\xj)|\leq {\bar c}\Delta x^2 $ and
$H^{(7)}(\xj)= |H^{(7)}(x_j)| +\mO(\Delta x)$. It implies that
the remainder $R_H$ is the form
\ba\label{R-H}
R_{H} =(\cdot -\xj )^7 H^{(7)}(x_j)/7! +\mO(\Delta x^8).
\ea
Since  $u'_n(\xj) =\Delta x^{-1} \bau'_n(1/2)$
with $\bau_n$ the Lagrange polynomials on the stencil
$\{-\frac 52,\ldots, \frac 52\}$,
in view of these arguments with \eqref{ERR-40} and \eqref{R-H},
it holds immediately that
\begin{align} \label{ERR-35}
h_{j+1/2}- \hat f_{j+1/2} =C_j  \Delta x^6 +\mO(\Delta x^7)
\end{align}
with the constant $C_j$ defined by
\ba
C_j = -\sum_{n=0}^5 \bau'_n(1/2)\frac{(n-3)^7}{7!}H^{(7)}(x_j).
\ea
Second, suppose that $\hf_{j+1/2}$ is obtained from the space spanned by
the set $\BB_5$ either in the case `C1' or `C2'.
Since $|H^{(6)}(x_{j\pm 1/2})| \leq {\bar c} \Delta x^2$,
a direct calculation from the definition of $\psi$
and the value of $\lambda$ in the `Central Condition A'
yields the bound $|\psi^{(7)}(\xj)| \leq c|\lambda|^2 \leq c\Delta x$.
It leads to the same estimate in \eqref{ERR-35}.
Therefore,
following the same methodology in the proof of Corollary \ref{CO-A-2}, we
can get the required result
$|\CE_j(f) | =\mO(\Delta x^6)$. The proof is completed.
\end{proof}

\subsection{\bf Algorithm}\label{ALG}

The algorithm for choosing the exponential approximation space and the
tension parameter is described as follows.
Without great loss, we suppose that
$H^{(n)}(\xj)$
does not vanish simultaneously for both $n=4,5$.

{\sf
\bigskip
\noindent
\hskip -0.truein\HRule

\noindent
Algorithm for choosing the tension parameter.

\vskip -0.1truein
\noindent
\hskip -0.truein\HRule

\noindent
Let $\mS_5$ be the $5$-point stencil around the given evaluation point
$\bax= \xj$.
From the given cell-average values $\bar {h}_n$ on $\mS_5$,
construct $H (x_{n+1/2})$ on the cell boundaries
and evaluate $H^{(\ell)}(\bax)$ for $\ell=4,5,6$ by using
the $\ell$th order divided difference around $\bax$,
denoted by $[H^{(\ell)}(\bax)]$.

\begin{enumerate}
\item [{\bf 0.}]
If $[H^{(6)}(\bax)]  =0 $,
we use the classical method based on algebraic polynomials, i.e.,
$$\BB_5=\{x^n:n=0,\ldots,4\}.$$
\item [{\bf 1.}]
If $[H^{(4)}(\bax)] \not= 0$, 
we choose the set of exponential polynomials as
$ \BB_5= \{1, x, x^2, \sinh\lambda x, \cosh\lambda x\} $
with
$$ \lambda^2={[H^{(6)}(\bax)]}/{[H^{(4)}(\bax)]}.$$
\item [{\bf 3.}]
If $[H^{(4)}(\bax)]=0$ and $ |[H^{(5)}(\bax)] | >0 $,
we modify $\BB_5$ as
$ \BB_5= \{1, x, x^2, \sinh\lambda x, \cosh\lambda x + \lambda^6 x^5/5!\}$
with
$$ \lambda^2={[H^{(6)}(\bax)]}/{[H^{(5)}(\bax)]}. $$
\end{enumerate}
In practice, $H^{(4)}(\bax) \not= 0$ almost everywhere, as long as
the flux $f$ is not constant or linear
(more generally, polynomially changing) around $\bax$.
Hence, we suggest to implement the proposed algorithm mainly based on `Step 1'.
}
\noindent
\hskip -0.truein\HRule

\section{A WENO scheme improving fifth-order accuracy}

Let $\xj$ be a given cell-boundary point.
The five-point stencil $\mS_5= \{x_{j-2},\dots, x_{j+2} \}$
around $\xj$ is divided into three candidate substencils
$\mS_k$ with $k=0,1,2$ consisting of three points.
A local numerical flux $\hat{f}^{k}(x)$ is computed
in each substencil $\mS_k$ and these solutions
are combined into a weighted average
to define a final WENO approximation to the value $h_{j+ 1/2}$:
\begin{equation}\label{WA-0}
\hat{f}_{j+ 1/2}= \sum_{k=0}^2 \omega_k \hat{f}^{k}_{j+ 1/2}.
\end{equation}
In WENO reconstruction,
the nonlinear weights are required to be close to the optimal weights
for each local solution in smooth areas to attain a maximal accuracy,
while removing the contribution of stencils that contain a singular point.
From this view point, we first introduce  new optimal
weights based on the space $\Gamma_5$ of
exponential polynomials.

\subsection{An optimal weights based exponential polynomials}

For the given cell-average values on the stencil $\mS_5$,
the (global) numerical flux $\hat f_{j+1/2}$ approximating $h_{j+1/2}$
can be expressed as
\ba\label{NF-Five}
\hat f_{j+1/2}
=  \sum_{\ell=0}^{4} C_{\ell} {\bar h}_{j-2+\ell}, \quad
{\rm with}\quad
C_\ell := \Delta x  \sum_{n=\ell }^4 L^\prime_{n}(x_{j+1/2}).
\ea
The local solution  $\hat f^k_{j+1/2}$ is also computed at each substencil
$\mS_k$ with $k=0,\ldots,3$ and it
is of the form
\ba\label{NF-Three}
\hat f^k_{j+1/2}
=  \sum_{\ell =0}^{2} C_{\ell}^k {\bar h}_{j-2+k+\ell} 
\quad {\rm with}\quad
 C_{\ell}^k =\Delta x\sum_{n=\ell}^2 u_n'(\xj).
\ea
It is necessary to remark that the local numerical flux is the same as
the case of the classical fifth-order WENO scheme.
Then the numerical flux $\hat f_{j+1/2}$ can be expressed
as a convex combination of the local fluxes:
$$\hat{f}_{j+1/2} = d_0 \hat{f}^0_{j+1/2} +
d_1 \hat{f}^1_{j+1/2} +
d_2 \hat{f}^2_{j+1/2}$$
where  $\{d_k\}$ are the so-called optimal (ideal) weights such that
$d_0 + d_1 + d_2 =1$.
The optimal weights $d_k$, $k=0,1,2$, for the proposed WENO scheme
can be obtained as
\ba\label{ID-W}
d_0=C_0/C^0_0, \  d_1=(C_1 - d_0 C^0_1)/C_0^1,\  d_2=1-d_0-d_1.
\ea
Unlike the case of the classical WENO scheme,
the optimal weights $\{d_k\}$ of the proposed WENO method
may vary depending on the choice of the parameter $\lambda$
but tends to the original ideal weights as $\Delta x \to 0$.

\subsection{A New Nonlinear Weight}

The smoothness indicator is one of the most important ingredient
in WENO reconstruction because
the nonlinear weights are determined by measuring the smoothness of the
local solution on each substencil $\mS_k$.
In this section, we introduce a new set of nonlinear weights
which improves the known fifth-order  WENO schemes.
We follow the methodology of the WENO-Z scheme but provide fundamental
modifications.  A new global smoothness indicator is incorporated into
the local smoothness indicator which measures the approximate magnitude
of the derivatives of the local solution on each substencil
based on $L^1$-norm \cite{ha1}.
Specifically, let  $\ID_{n,k}$ be the operators defined by
\begin{align}
\label{N-LIM-2}
\begin{split}
& \ID_{1,k}f:=(1-k)f_{j-2+k}+(2k-3)f_{j-1+k}+(2-k)f_{j+k}, \\
& \ID_{2,k}f:= f_{j-2+ k}-2f_{j-1+k} +f_{j+k}.
\end{split}
\end{align}
Here, the operator $\ID_{1,k}f$ is a generalized undivided difference
of $f$  which approximates
$\Delta x f'$ at $x_{j+1/2}$ with higher convergence rate \cite{ha1}:
\ba\label{APP}
\ID_{1,k}f =f'(x_{j+1/2})\Delta x + \mO(\Delta x^3).
\ea
Then
the smoothness indicators $\beta_k$ are defined as follows:
\begin{eqnarray}\label{N-LIM}
\beta_k :=
\theta \left| \ID_{1,k}f\right|
+ \left| \ID_{2,k}f\right|, \quad \xi \in (0, 1],
\end{eqnarray}
where the value $\theta$ is a balanced trade off between
$\ID_{1,k}f$ and $\ID_{2,k}f$.
Having performed numerical experiments with several
alternatives, we take
$\theta=0.25$ for all test problems
except the case of  $1$-D linear advection equation in which $\theta = 0.1$.
A novel idea of the proposed nonlinear weights is to measure
the higher order information of the numerical flux
on the large stencil $\mS_5$ by using the fourth-order  undivided difference
$$\tau_5 := \ID_4 f_j 
=  f_{j-2} - 4f_{j-1} + 6f_{j} - 4f_{j+1} + f_{j+2}.$$
With these (local and global) smoothness indicators at hand,
the (unnormalized) nonlinear weight $\alpha_k$, $k=0,1,2$,  are computed
as
\begin{align}\label{ALPHA-0}
\alpha_k = {d_k} \left(1 + \frac{\tau^2_5}{\beta_k^2 + \epsilon}\right),
\quad
\epsilon: = \epsilon(\Delta x).
\end{align}
Here, $\epsilon>0$ is usually employed to prevent the denominator
from a division by zero but it in fact affects the order of accuracy of
the WENO method especially at the critical points.
The specific choice of $\epsilon $ will be discussed
in Proposition \ref{LEM-OMEGA}.
Then, the final weights $\omega_k$ are defined via the normalization
process, i.e.,
\ba\label{WEIGHT}
\omega_k = \frac{\alpha_k}{\sum^{2}_{\ell=0} \alpha_\ell},
\quad k =0,1,2.
\ea

\subsection{Convergence Order of WENO-H}

It is basic to require that the  numerical solution $\hat f_{j\pm1/2}$
approximates the flux $h$ in \eqref{CELL-AVG}
with a suitable convergence order on  smooth regions.
For this, the nonlinear weights $\omega_k$
should converge to the optimal weights $d_k$ as $\Delta x \to 0$.
To attain the sixth-order accuracy of the numerical
flux $\hat{f}_{j+1/2}$, the nonlinear weights need to satisfy
the following sufficient condition (e.g., see \cite{ha1})
\begin{align}\label{WD-COND}
\begin{split}
\omega_k^{\pm} - d_k =O(\Delta x^4),
\end{split}
\end{align}
where superscript `$\pm$' on the weight $\omega_k$ corresponds to
their use in the substencils of the local solution $f^k_{j\pm 1/2}$
respectively.
In what follows, we show that the new nonlinear weights $\omega_k$
fulfill the condition in \eqref{WD-COND}.
For this purpose,
it is helpful to introduce the general form of
$\beta_k$ which can be obtained by using the Taylor expansion argument:
\begin{align}\label{BETA-10}
\begin{split}
\beta_0&= \theta\left|{ f_{j+1/2}'}{\Delta x}-
\frac{23}{24}{f_{j+1/2}''' }\,{\Delta x}^3
\right| +\left|f_{j+1/2}''{\Delta x}^2
- \frac{3}{2} f_{j+1/2}'''{\Delta x}^3 \right|
+ \mO(\Delta x^{4}), \\
\beta_1 &={\theta} \left|  f_{j+1/2}' {\Delta x}
+ \frac{1}{24} f_{j+1/2}'''{\Delta x}^3\right|
+ \left|f_{j+1/2}''{\Delta x}^2
-\frac{1}{2} f_{j+1/2}''' {\Delta x}^3
\right| + \mO(\Delta x^{4}) , \\
\beta_2 & = \theta \left| f_{j+1/2}'{\Delta x}
+\frac{1}{24} f_{j+1/2}''' {\Delta x}^3\right|
+ \left| f_{j+1/2}'' {\Delta x}^2
+ \frac{1}{2} f_{j+1/2}'''{\Delta x}^3 \right|
+ \mO(\Delta x^{4}).
\end{split}
\end{align}

\begin{proposition}\label{LEM-OMEGA}
Let $d_k$, $k=0,1, 2$, be the optimal weights in \eqref{ID-W}.
Assume that $\epsilon = \epsilon({\Delta x})$ in the definition of $\alpha_k$
\eqref{ALPHA-0} is chosen as $\epsilon = \Delta x^\gamma$
with $0< \gamma \leq  4$.
If  $f$ is smooth around the global stencil $\mS_5$,
then the weights $\omega_k$ in \eqref{WEIGHT}
satisfy the following condition
$$ |\omega_k -d_k | =\mO(\Delta x^4)$$
even near the critical points.
\end{proposition}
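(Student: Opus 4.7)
The plan is to bound the ratio $\tau_5^2/(\beta_k^2+\epsilon)$ uniformly by $\mathcal{O}(\Delta x^4)$, and then propagate this bound through the normalization step \eqref{WEIGHT}. Write $\alpha_k = d_k(1+r_k)$ with $r_k:=\tau_5^2/(\beta_k^2+\epsilon)$; since $\sum_k d_k = 1$, the normalization gives
\[
\omega_k - d_k \;=\; \frac{d_k(1+r_k)}{1+\sum_\ell d_\ell r_\ell}\;-\;d_k
\;=\;\frac{d_k\bigl(r_k-\sum_\ell d_\ell r_\ell\bigr)}{1+\sum_\ell d_\ell r_\ell}.
\]
Once $\max_k r_k = \mathcal{O}(\Delta x^4)$ is established, the denominator is $1+\mathcal{O}(\Delta x^4)$ and the numerator is $\mathcal{O}(\Delta x^4)$, so the conclusion follows immediately.

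The first step is to estimate $\tau_5$. Since $\tau_5 = \ID_4 f_j$ is the standard fourth-order undivided difference, a direct Taylor expansion around $x_j$ gives $\tau_5 = f^{(4)}(x_j)\Delta x^4 + \mathcal{O}(\Delta x^5)$, so $\tau_5^2 = \mathcal{O}(\Delta x^8)$ in all smooth regions, regardless of whether lower-order derivatives vanish.

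The second and more delicate step is to lower-bound $\beta_k^2+\epsilon$. From the explicit expansions in \eqref{BETA-10}, one sees that in the generic (non-critical) case $\beta_k = \theta|f'_{j+1/2}|\Delta x + \mathcal{O}(\Delta x^2)$, giving $\beta_k^2 \gtrsim \Delta x^2$ and hence $r_k = \mathcal{O}(\Delta x^6)$. At critical points where $f'(x_{j+1/2})=0$, the $\ID_{2,k}f$ term inherits $|f''_{j+1/2}|\Delta x^2$ and keeps $\beta_k \gtrsim \Delta x^2$ (so $r_k = \mathcal{O}(\Delta x^4)$), and at higher-order critical points where additional derivatives vanish, $\beta_k$ may become arbitrarily small. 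This is precisely where the safeguard $\epsilon = \Delta x^\gamma$ with $\gamma\leq 4$ takes over: worst-case, $\beta_k^2+\epsilon \geq \Delta x^\gamma$, so
\[
r_k \;\leq\; \frac{\tau_5^2}{\epsilon} \;=\; \mathcal{O}(\Delta x^{\,8-\gamma}) \;=\; \mathcal{O}(\Delta x^4).
\]

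Hence, in every regime (smooth non-critical, first-order critical, or high-order critical), $r_k = \mathcal{O}(\Delta x^4)$, and substitution into the displayed identity for $\omega_k-d_k$ completes the proof; the identical argument applied to the shifted substencils governing $f^k_{j-1/2}$ gives the analogous bound $|\omega_k^{-} - d_k| = \mathcal{O}(\Delta x^4)$. The main obstacle is the second step: justifying the uniform lower bound on $\beta_k^2+\epsilon$ at high-order critical points without a priori knowing which derivatives vanish; this is resolved by the case split above, exploiting the $L^1$-based structure of \eqref{N-LIM} (which prevents cancellations that could further shrink $\beta_k$) together with the worst-case bound $r_k \leq \tau_5^2/\epsilon$.
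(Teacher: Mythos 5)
Your proposal is correct and follows essentially the same route as the paper's proof: both reduce the claim to the uniform bound $\tau_5^2/(\beta_k^2+\epsilon)=\mO(\Delta x^4)$ (splitting into the cases where $\beta_k^2$ dominates $\epsilon$ and where the safeguard $\epsilon=\Delta x^\gamma$ with $\gamma\le 4$ takes over) and then propagate this through the normalization using $\sum_k d_k=1$. Your version is in fact slightly more complete, since you write out the normalization identity for $\omega_k-d_k$ explicitly and spell out the high-order critical-point case that the paper dispatches with ``it can be proved similarly.''
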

\begin{proof}
Taking  the Taylor expansion of $f$ around $\xj$,
we can find that there exists a positive integer $r \in \NN$ such that
each $\beta_k$ in \eqref{BETA-10} can be expressed as
\ba\label{COND-B}
\beta_k = c |f_{j+1/2}^{(r)}\Delta x^r| +\mO(\Delta x^{r + 1})
\ea
with a constant $c>0$ independent of $f$ and $\Delta x$.
Certainly, if $\xj$ is not a critical point of  $f$,
then $r=1$.
Moreover, the truncation of the global smoothness indicator $\tau_5$
is of the form
\ba\label{COND-T}
\tau_5 = |f^{(4)}_{j+1/2} \Delta x^4| + \mO(\Delta x^5).
\ea
Then, we first consider the case $2r \leq \gamma$.
Substituting $\epsilon = \Delta x^\gamma$ in \eqref{ALPHA-0}
and by using \eqref{COND-B} and \eqref{COND-T},
it is straightforward that
\begin{align}\label{GB-10}
\begin{split}
\frac{\tau_5^2}{\beta_k^2 + \epsilon}
=\frac{\tau_5^2}{\beta_k^2 + \Delta x^\gamma}
= c_f \Delta x^{8 - 2r}  \frac{1 + \mO(\Delta x)}
{ 1+ \mO(\Delta x^{\gamma -2r})}
\end{split}
\end{align}
for some constant $c_f>0$.
By hypothesis, $0<\gamma \leq 4$ and $2r \leq \gamma$ so that
it yields the relation
\begin{align}\label{HY-10}
\alpha_k = d_k \Big (1+ \frac{\tau_5^2}{\beta_k^2 + \epsilon }\Big )
=d_k + \mO(\Delta x^4).
\end{align}
Further, since $d_1+d_2+d_3 =1$,
putting \eqref{HY-10}  into \eqref{WEIGHT} clearly verifies that
$|d_k- \omega_k| = \mO(\Delta x^4)$, regardless of the issue of
the critical points.
Also, in the case  $2r > \gamma$, it can be proved similarly.
Therefore, the proof is completed.
\end{proof}

\begin{table*}[t!]
\tabcolsep=18.pt
\caption{\label{tab:order-1d} $L^1$ and $L^\infty$ approximation errors
and orders of accuracy for the one-dimensional Euler equation
\eqref{euler2_1} at $t=4$.
}
\begin{tabular}{c c c c c}
\hline\hline
&   WENO-JS   &WENO-M       &WENO-Z       &WENO-H       \\
	\hline
$N$&  \multicolumn{4}{c}{$L^1$ approximation error (order)}  \\
	\hline
50& 3.98E-02 (~---~)&9.70E-03 (~---~)&9.62E-03 (~---~)&7.26E-03 (~---~)\\
100& 1.86E-03 (4.42)& 2.69E-04 (5.17)& 2.75E-04 (5.13)&  2.81E-05 (8.01)\\
200& 5.85E-05 (4.99)& 8.35E-06 (5.01)& 8.36E-06 (5.04)&  4.49E-07 (5.97)\\
400& 1.83E-06 (5.00)& 2.61E-07 (5.00)& 2.61E-07 (5.00)&  7.04E-09 (6.00)\\
800& 5.71E-08 (5.00)& 8.16E-09 (5.00)& 8.16E-09 (5.00)&  9.53E-11 (6.21)\\
\hline\hline
$N$&    \multicolumn{4}{c}{$L^\infty$ approximation error (order)}\\
\hline
50&6.03E-02 (~---~)&1.49E-02 (~---~)&1.49E-02 (~---~)& 1.11E-02 (~---~)\\
100& 2.71E-03 (4.47)&  4.18E-04 (5.16)& 4.49E-04 (5.05)& 4.81E-05 (7.85)\\
200& 9.81E-05 (4.79)&  1.31E-05 (5.00)& 1.34E-05 (5.07)& 7.08E-07 (6.08)\\
400& 3.28E-06 (4.90)&  4.10E-07 (5.00)& 4.12E-07 (5.02)& 1.11E-08 (6.00)\\
800& 1.03E-07 (5.00)&  1.28E-08 (5.00)& 1.28E-08 (5.00)& 1.50E-10 (6.20)\\
\hline
\end{tabular}

\vskip 0.2truein
\caption{\label{tab:order-2d}
$L^1$ and $L^\infty$ approximation errors and orders of accuracy
for the two-dimensional Euler equation \eqref{euler2_1} at $t=4$
}
\tabcolsep=15.pt
\begin{tabular}{c c c c c}
\hline\hline
&   WENO-JS   &WENO-M       &WENO-Z       &WENO-H      \\
\hline
$N \times N$ &\multicolumn{4}{c}{$L^1$ approximation error (order)}   \\
\hline
25$\times$25 & 3.06E-01 (~---~)& 2.70E-01 (~---~)&  2.26E-01 (~---~)& 2.03E-01 (~---~)  \\
50$\times$50 & 5.57E-02 (2.46)&  1.40E-02 (4.27)&  1.40E-02 (4.01)& 1.06E-02 (4.26)\\
100$\times$100 & 2.71E-03 (4.36)& 4.01E-04 (5.13)&  4.12E-04 (5.09)& 1.43E-05 (9.54)\\
200$\times$200 & 8.77E-05 (4.95)& 1.25E-05 (5.00)&  1.25E-05 (5.04)& 2.28E-07 (5.97)\\
400$\times$400 & 2.74E-06 (5.00)& 3.91E-07 (5.00)&  3.91E-07 (5.00)& 3.54E-09 (6.01)\\
\hline\hline
$N \times N$ &  \multicolumn{4}{c}{$L^\infty$ approximation error (order)}\\
\hline
25$\times$25  &  4.81E-01 (~---~)& 4.25E-01 (~---~)&  3.54E-01 (~---~)&   3.22E-01 (~---~)\\
50$\times$50 &  8.20E-02 (2.55)& 2.17E-02 (4.29)&  2.17E-02 (4.03)&    1.63E-02 (4.3)\\
100$\times$100 & 3.75E-03 (4.45)& 6.26E-04 (5.11)&  6.67E-04 (5.02)&    2.78E-05 (9.1)\\
200$\times$200 & 1.42E-04 (4.72)& 1.96E-05 (5.00)&  2.00E-05 (5.06)&    3.63E-07 (6.2)\\
400$\times$400 & 4.75E-06 (4.90)& 6.15E-07 (5.00)&  6.18E-07 (5.02)&    5.59E-09 (6.0)\\
\hline\hline
\end{tabular}
\end{table*}

\subsection{Accuracy test for smooth periodic Euler equations}
The goal of  this subsection is to demonstrate the convergence rate
of accuracy of the proposed WENO-H scheme.
We especially show that the carefully chosen exponential approximation
space can improve the accuracy of the WENO reconstruction.
The desired order of accuracy of WENO-H
is tested by solving the following Euler equation for one and two-dimensional
cases:
\begin{equation}\label{euler2_1}
        U_t + F(U)_x + G(U)_y = 0,
\end{equation}
with
\begin{align*}
U=
\begin{bmatrix}
\rho\\  \rho u \\  \rho v \\  E
\end{bmatrix}, \
F(U)=
\begin{bmatrix}
\rho u \\ p + \rho u^2  \\ \rho uv  \\ u(p+E)
\end{bmatrix}\quad  {\rm and} \quad
G(U)=
\begin{bmatrix}
\rho v\\ \rho vu \\  p + \rho v^2\\  v(p+E)
\end{bmatrix}.
\end{align*}

Here, $\rho, u, v$, and $E$ indicate the density,
particle velocities (along the  $x$ and $y$-directions),
and total energy, respectively.
The pressure $p$ has a relation with the total energy, that is,
ideal gas equation state:
$$ E= \frac{p}{\gamma-1}+ \frac{\rho (u^2 + v^2)}{2} $$
with $\gamma$ the ratio of specific heats.
Here, we set  $\gamma=1.4$.
The initial data is
\[
\rho(x,y,t)=1+0.5\sin(4\pi(x+y))
\]
with $u=1$, $v=-1/2$, and $p=1$.
The exact solution on the unit square is
\[
\rho(x,y,t)=1+0.5\sin(4\pi(x+y -t(u+v))),
\]
and the periodic boundary conditions are employed.
We perform the numerical simulation until the final time $t=4$.
For the time evolutions, we use non-TVD RK4 \cite{shu2} with
$\Delta t = \Delta x^{6/4}$.  The numerical results of WENO-H
and other well-known fifth-order WENO schemes
are presented in Table~\ref{tab:order-1d} and
Table~\ref{tab:order-2d} for one and two-dimensional problems respectively.
The $L^1$- and $L^\infty$-errors and convergence orders
of density $\rho$ are reported.
In addition, we also compare the effectiveness of these WENO schemes
by computing the CPU time versus $ L^{\infty}$-error using various grids.
In the comparison,  the $L^\infty$-errors against CPU time are presented
in Fig. \ref{fig:2d-efficiency} for one and two-dimensional problems.
Each marker indicates `CPU time-errors' at
$50 \times 2^k$ and $(25\cdot 2^k)\times (25\cdot 2^k)$ grid points
for one and two-dimensional cases with $k=0,1,2,3,4$.
The WENO-H scheme shows better efficiency compared to other WENO schemes.

\begin{figure*}
\begin{center} 
\includegraphics[width= 0.355\textwidth]{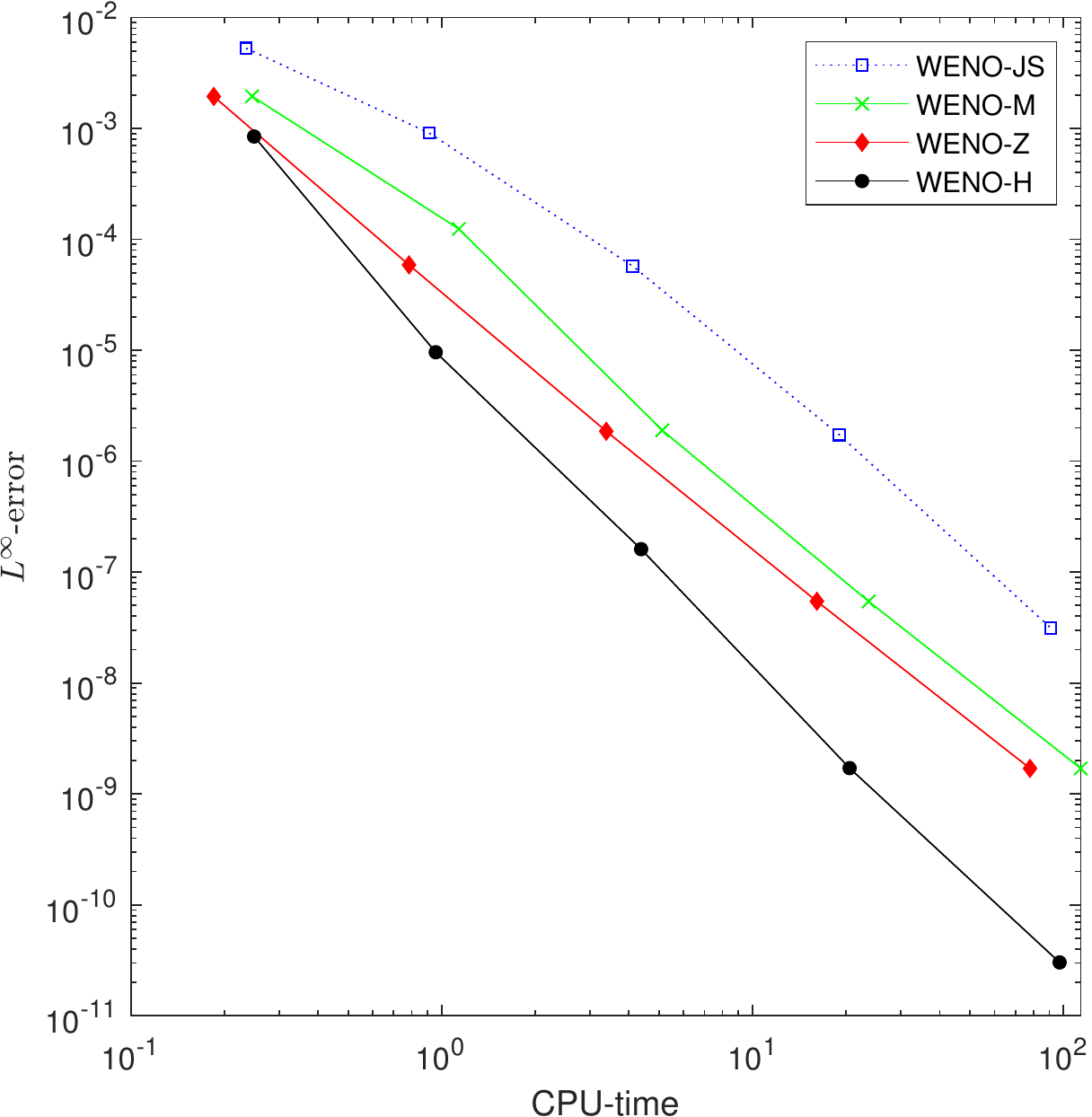}
\includegraphics[width= 0.35\textwidth]{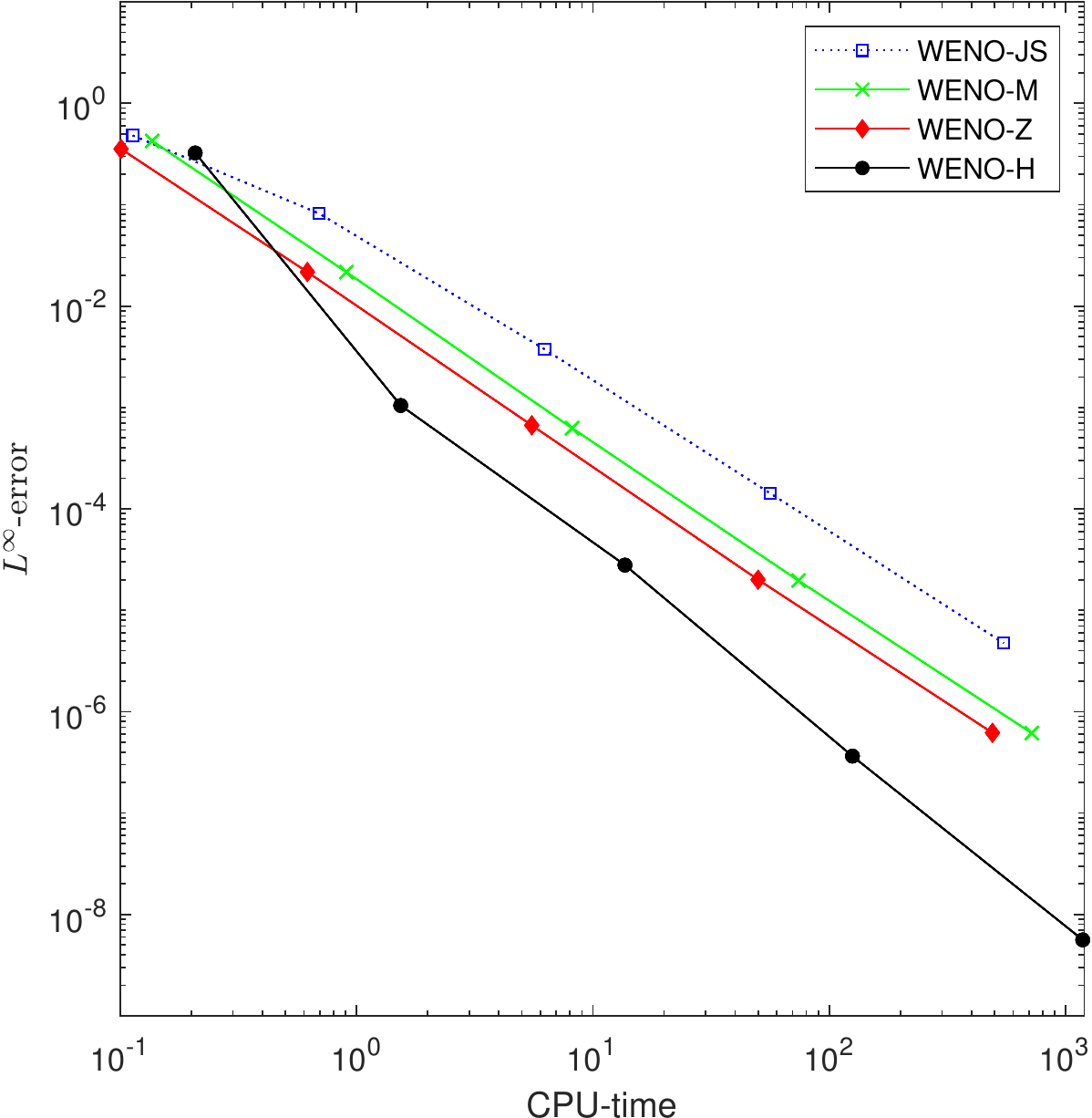}
\end{center} \caption{\label{fig:2d-efficiency}
Numerical Efficiency (CPU time versus errors) for the one (left)
and two-dimensional (right) Euler equations \eqref{euler2_1}.
}
\end{figure*}

\section{Numerical Results} \label{NUM-REL}

In this section, we provide some experimental results to illustrate
the performance of the WENO-H scheme.
The experimental results of the WENO-H scheme are compared with
those of other well-known fifth-order WENO schemes:
WENO-JS, WENO-M and WENO-Z.
For the evaluation of the shock capturing abilities of the proposed algorithm,
the simulations are performed for several benchmarks of
one and two-dimensional scalar and system of conservation laws.
For all the numerical experiments in this section,
we employ the third-order TVD Runge-Kutta-type discretization
for time evolution.

\subsection{Scalar Test Problems}

We investigate the behavior of the WENO-H
method for the one-dimensional advection equation
with an initial data including unusual edges and contact discontinuities.

\begin{example}\label{LE-0}
{\rm (Linear equation) Let us solve the advection equation:
\begin{equation}\label{ex0}
	q_t + q_x = 0, \quad t \in \Bbb{R}^+
\end{equation}
with the initial condition specified as
\begin{equation} \label{SING}
q(x,0) = q_0(x) =
\begin{cases}
-x \sin(\frac{3 \pi}{2}x^2) & \text{for $ x \in [-1, -\frac{1}{3}]$}, \\
|\sin(2\pi x) | & \text{for $ x \in (-\frac{1}{3}, \frac{1}{3}]$}, \\
2x - 1 -\frac{1}{6} \sin(3\pi x) & \text{for $ x \in (\frac{1}{3}, 1]$}.
\end{cases}
\end{equation}
We set the periodic boundary conditions and carry out the computation
until the final time $t = 11$ with $\Delta x = 0.01$.
The CFL condition number is $0.4$.
The numerical results of this advection equation with initial
condition \eqref{SING} are shown in Fig. \ref{fig_sing}.
We observe that the WENO-H method has smaller errors than
other WENO fifth-order schemes near the singular points.
}
\end{example}

\begin{figure*}[t!]
\begin{center}
\includegraphics[width= 0.70\textwidth]{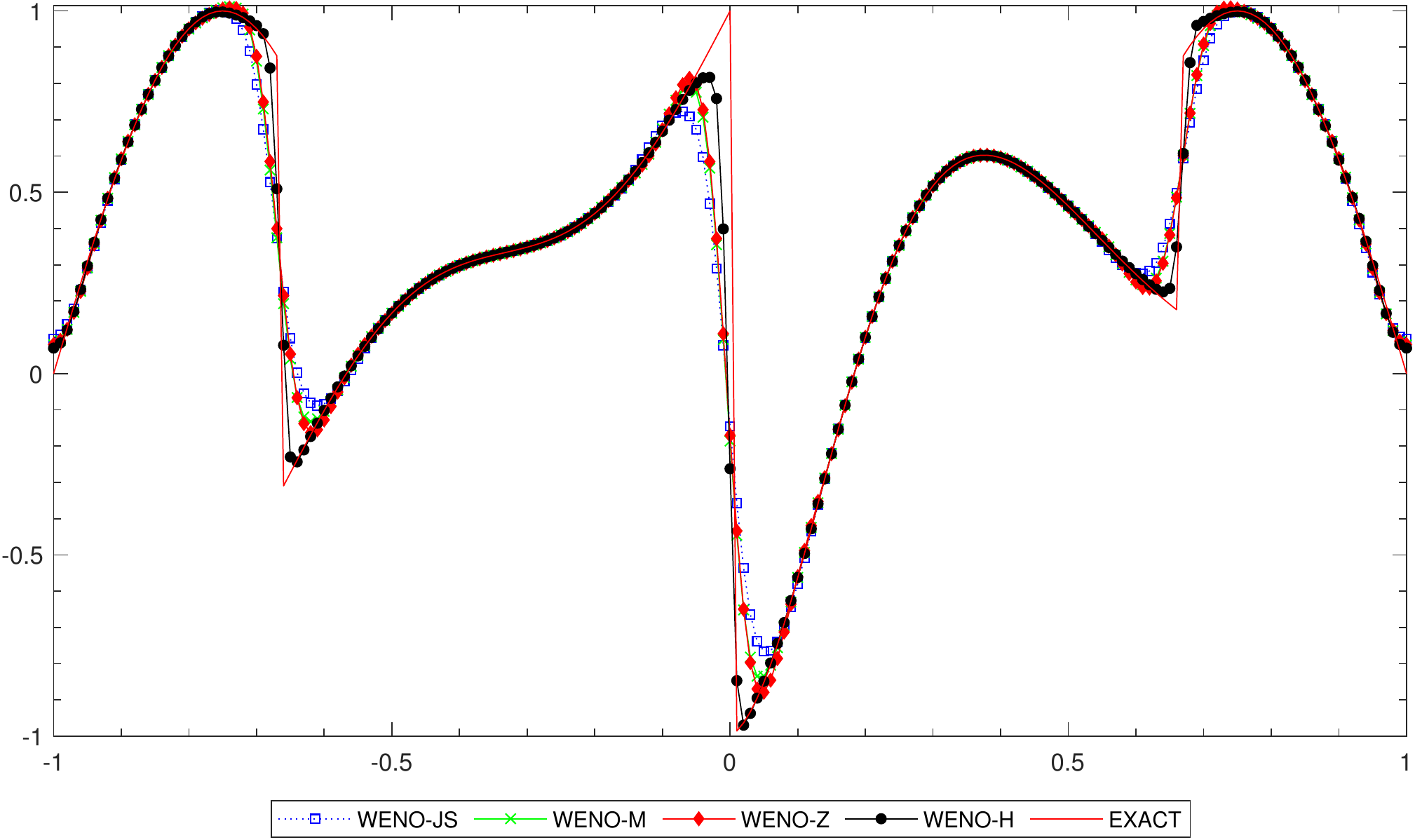}
\end{center} \vskip -.1truein
\caption{\label{fig_sing} Comparison of the analytic solution with
the numerical results of the advection equation with initial
conditions \eqref{SING} with WENO-JS, WENO-M, WENO-Z, and WENO-H at $t=11$
with $200$ grid points.
	}
\end{figure*}

\subsection{One-dimensional Euler Systems}

Let us consider the one-dimensional Euler gas dynamics for ideal gases.
The characteristic decomposition is performed to generalize the WENO methods
\cite{shu3}.

\begin{figure*}[t!]
\begin{center} 
\includegraphics[width=0.85\textwidth, height=0.27\textheight]{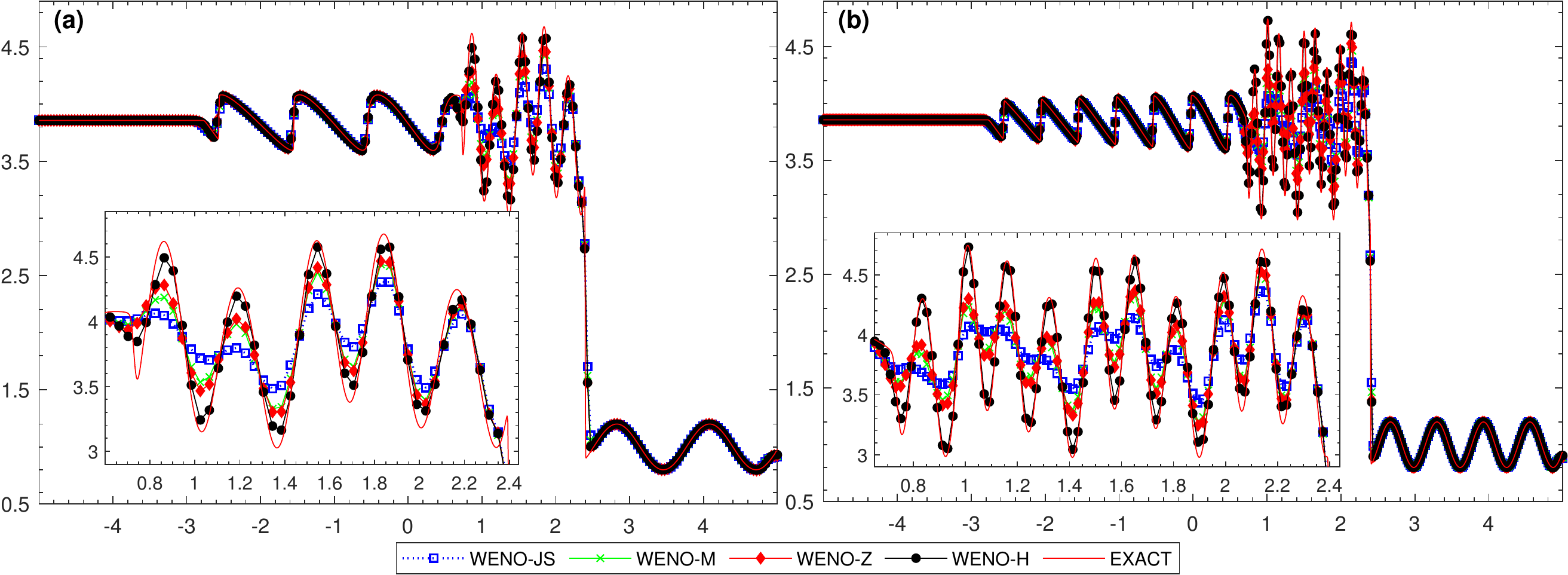}
\end{center} 
\caption{\label{ent_5} Density profiles of the shock-entropy interacting
of Shu-Osher \cite{shu2} by WENO-JS, WENO-M, WENO-Z, and WENO-H.
(a) $t=1.8$ with 250 grid points for $k=5$, (b) $t=1.8$ with 500 grid points
for $k=10$.
}

\vskip0.1truein
\begin{center} 
\includegraphics[width=.85 \textwidth,height=0.27\textheight]
  {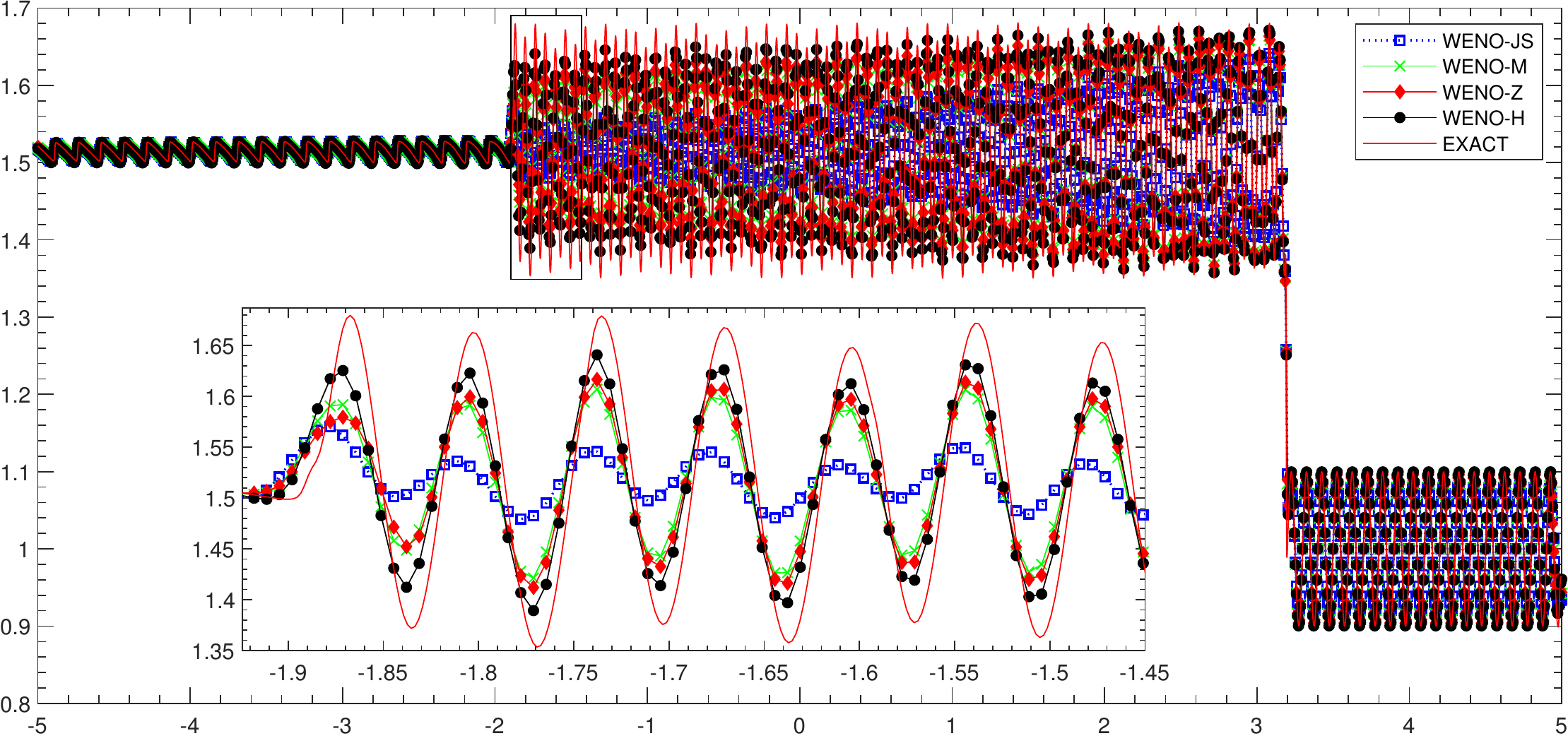}
\end{center} \caption{\label{toro1}
Numerical results with WENO-JS, WENO-M, WENO-Z, and WENO-H at $t=5$
with $1500$ grid points.
	}
\end{figure*}
\begin{example}
{\rm
We apply the WENO-H scheme to the shock-density wave interaction test problem
that describes shock interacting with entropy waves.
This model problem was introduced by Shu and Osher \cite{shu2}
to test the capability of a high-order
WENO scheme to capture the high frequency waves.
The solution of this example includes large scale waves, small shocks and
fine scale structures.
We solve this problem on the interval $[-5,5]$ with the
specified initial condition:
\begin{equation*}
(\rho, u, p) =
\begin{cases}
(3.857143,2.629369,10.33333) & \text{for $ x \in [-5, -4)$}, \\
(1+ \varepsilon \sin(kx),0,1) & \text{for $ x \in [-4, 5]$}
\end{cases}
\end{equation*}
where $\varepsilon=0.2$ is the amplitude of the entropy wave and
$k$ is wave number of the entropy wave.
A shock wave flowing to the right (with speed `Mach 3') interacts sine wave
in a perturbed density disturbance such that it yields a flow field with
discontinuities as well as smooth structures.
We simulate this problem for $k=5, 10$ until the output time $t=1.8$
using the CFL number $0.5$.
The exact solution of this model problem is unknown.
So, the reference solution is computed
by the classical fifth-order WENO-JS scheme with $3200$ points.
Fig. \ref{ent_5} plots a comparison of the densities $\rho$ for all schemes
at time $t = 1.8$.
Notice that WENO-H resolves most of the waves with a good accuracy
(to their amplitudes) over other tested methods.

In addition, as a variation of the Shu-Osher problem,
let us solve Titarev-Toro problem
with the initial condition given as follows \cite{T-TORO}:
\begin{equation*}
(\rho, u, p) =
\begin{cases}
(1.515695, 0.523346, 1.80500) & \text{for $ x \in [-5, -4)$}, \\
(1+ 0.1 \sin(20 \pi x), 0, 1) & \text{for $ x \in [-4, 5]$}.
\end{cases}
\end{equation*}
The simulation is performed up to time $t=5$ with $\Delta x=1/150$.
Fig. \ref{toro1} shows the numerical solutions  on a grid with
$1500$ grid points (i.e., $\Delta x=1/150$)
for all the computed  WENO schemes.
We observe that the oscillatory wave pattern behind shock entropy
wave interactions is well captured by WENO-H better than other WENO methods.
}
\end{example}

\begin{figure*}[t!]
\begin{center}
\includegraphics[width= 0.8\textwidth, height=0.25\textheight]{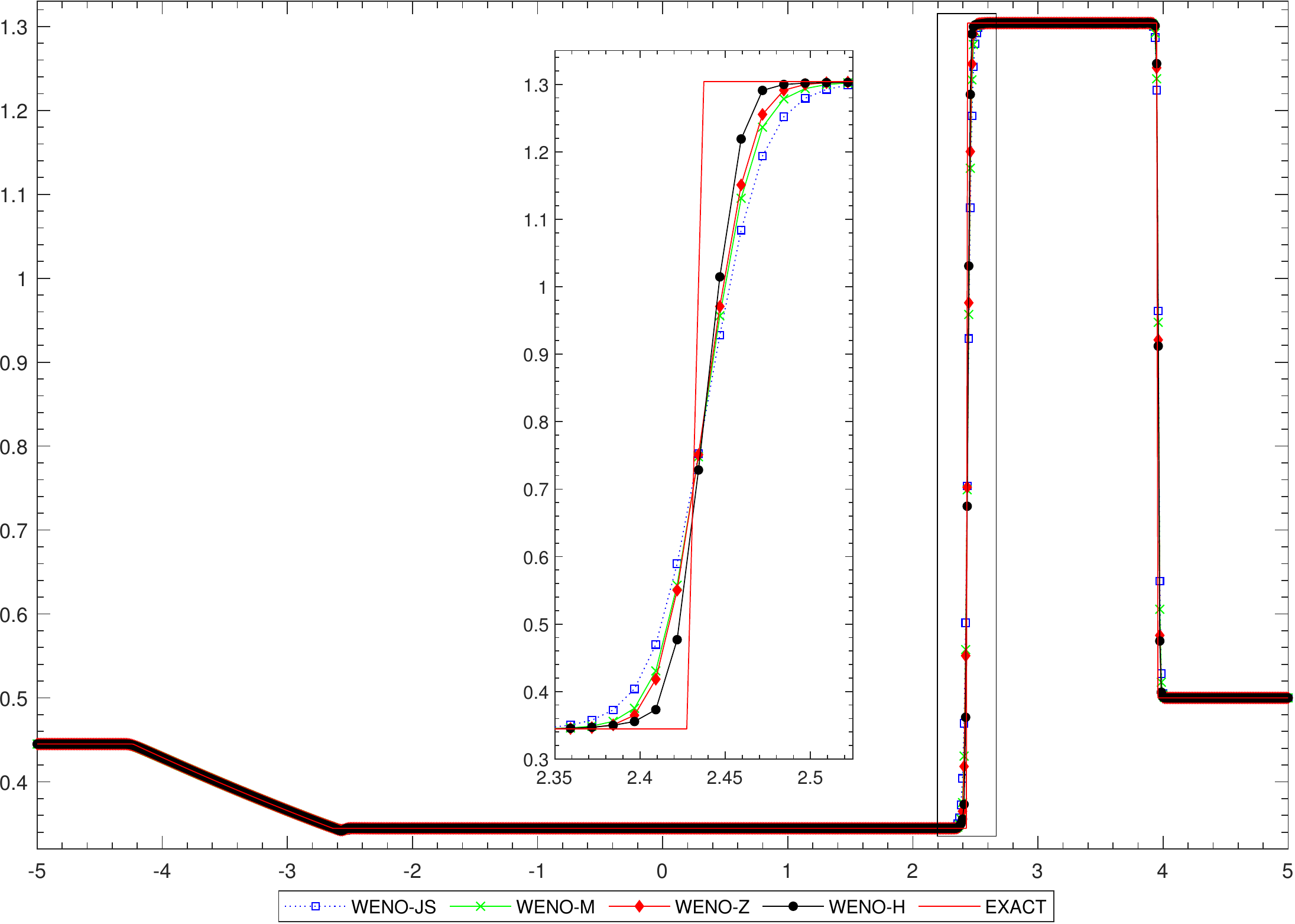}
\end{center}
\caption{\label{lax} Numerical results of Lax problem \cite{lax1}
with WENO-JS, WENO-M, WENO-Z, and WENO-H at $t=1.6$ with $200$ grid points.
}

\vskip 0.2truein
\begin{center} 
\includegraphics[width=0.8\textwidth, height=0.25\textheight]{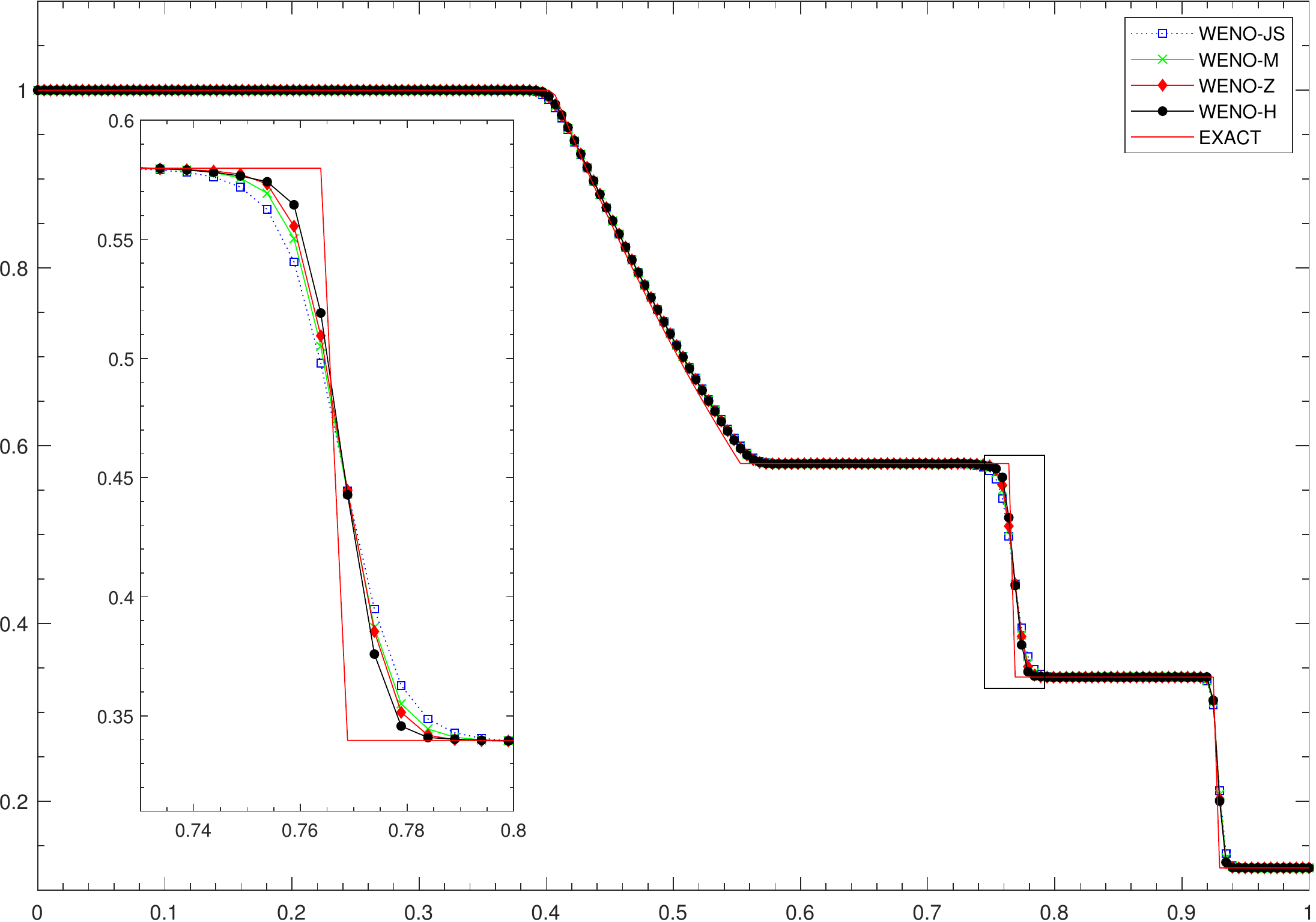}
\end{center} \caption{\label{sod}
Numerical results of Sod problem \cite{sod} with WENO-JS, WENO-M, WENO-Z,
and WENO-H at $t=0.2$ with $200$ grid points.
}
\end{figure*}

\begin{example}
{\rm We test one-dimensional Euler equation for the Lax problem \cite{lax1}.
The initial condition is specified by
\begin{equation*}
	(\rho, u, p) =
	\begin{cases}
	(.445, .698,3.528) & \text{for $ x \in [-5, 0)$}, \\
	(.5, 0, .571) & \text{for $ x \in [0, 5]$}
	\end{cases}
\end{equation*}
with $\gamma=1.4$.
The computation is performed up to time $t= 0.16$ with $200$ grid
points (i.e.,  $\Delta x = 1/20$).
Fig. \ref{lax} presents the exact solution (reported in Toro \cite{toro1})
and the density $\rho$ profiles obtained by several  WENO schemes.
The result of WENO-H is closer to the exact solution
and captures the shock and contact transitions nearby discontinuities
better than other WENO schemes.
	}
\end{example}

\begin{example}
{\rm  In this example, we solve the one-dimensional Euler equation
for the Sod problem
\cite{sod} with the Riemann initial condition given by
\begin{equation*}
	(\rho, u, p) =
	\begin{cases}
	(1,0.75,1) &  \text{for $ x \in [0, 0.5)$}, \\
	(0.125,0,0.1) & \text{for $ x \in [0.5, 1]$}
	\end{cases}
\end{equation*}
with $\gamma=1.4$.
The computation has been performed up to time $t = 0.2$.
The computed density distributions and exact solution are shown
in Fig. \ref{sod} with 200 grid points (i.e. $\Delta x = 1/200$).
The exact solution is obtained by using the exact Riemann solver \cite{toro1}.
One can see that the solution of WENO-H well captures the shock and contact
discontinuity without redundant oscillations better than
WENO-JS, WENO-M and WENO-Z do.
}
\end{example}

\subsection{Two-dimensional Euler Systems}
The numerical results of two-dimensional compressible Euler equations are
provided in this section.
We specify an initial condition for each test problem and set
$\gamma = 1.4$ except the two-dimensional Rayleigh-Taylor instability problem.

\begin{figure*}[ttp]
\begin{center}
\includegraphics[width=1.10\textwidth]{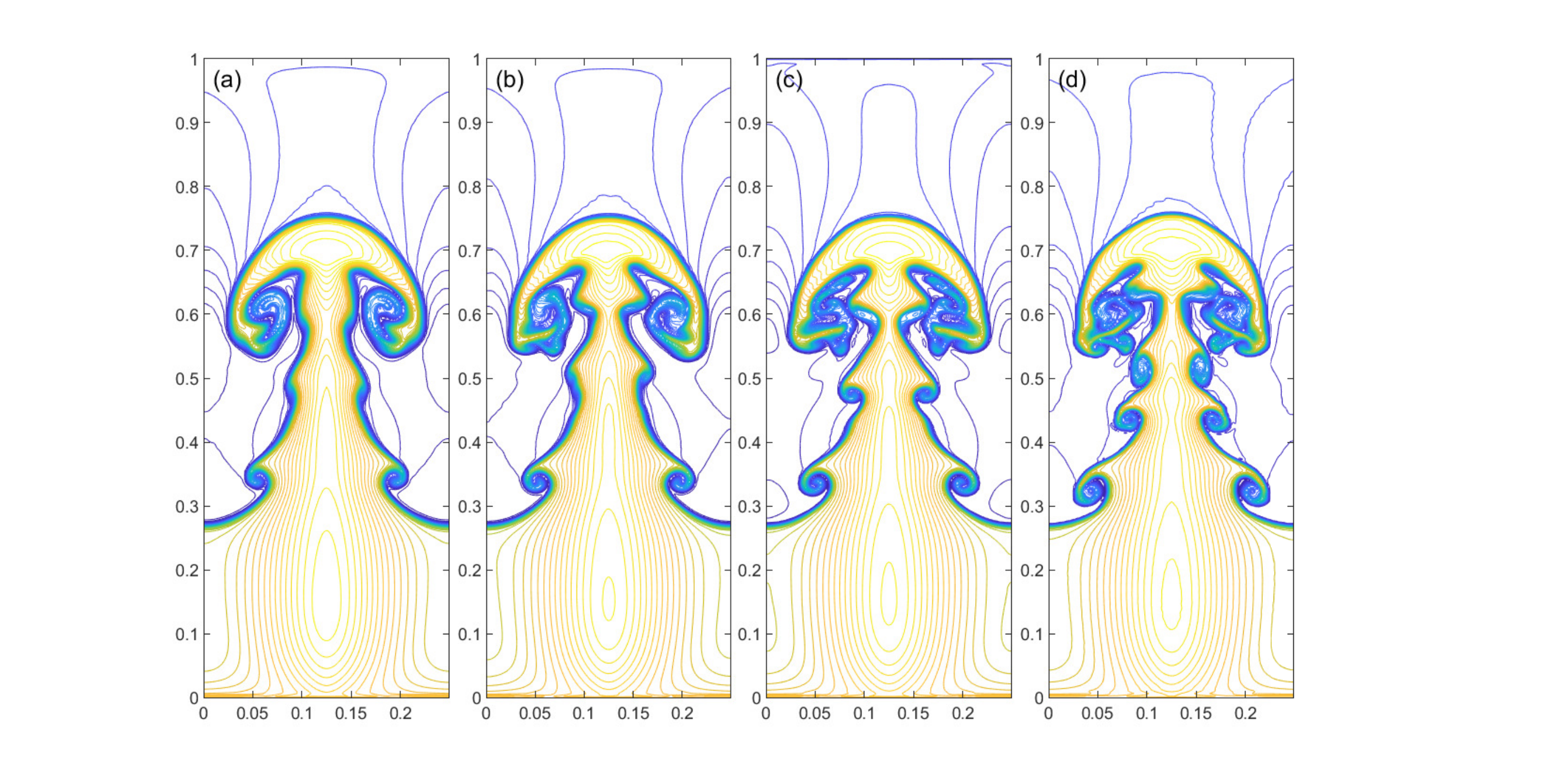}
\end{center}
\vskip -0.2truein
\caption{\label{rti_2} Two-dimensional Rayleigh-Taylor instability
\cite{Shi, Xu}:
(a) WENO-JS, (b) WENO-M, (c) WENO-Z, and (d) WENO-H at $t = 1.95$
with $120 \times 480$ grid points.
	}
\end{figure*}

\begin{figure*}[t!]
\begin{center}
\includegraphics[trim=30 130 30 0,clip, width= 0.95\textwidth]
{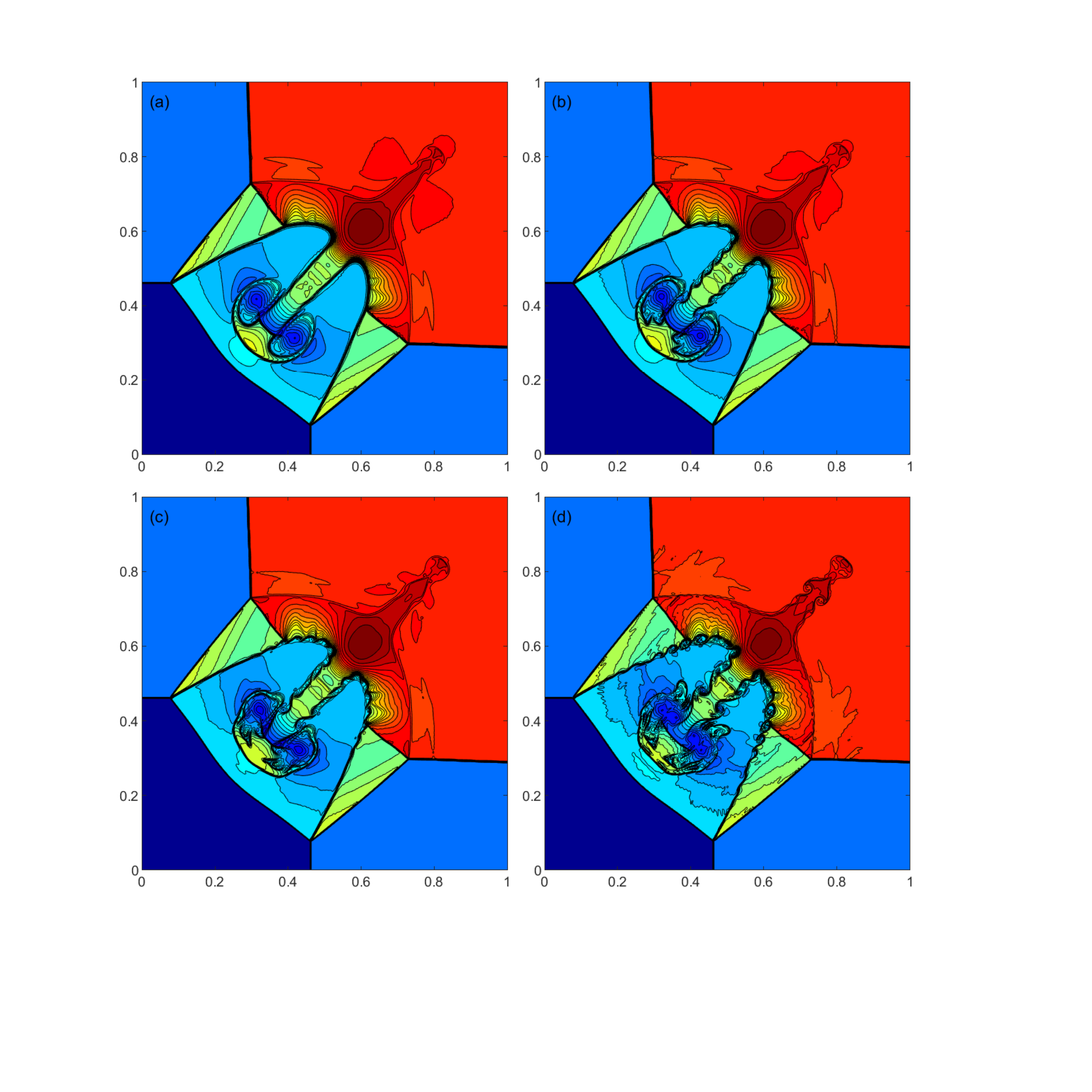}
\end{center}
\caption{\label{quad2_2} {Density profiles
of two-dimensional problem \cite{car}:
(a) WENO-JS, (b) WENO-M, (c) WENO-Z, (d) WENO-H at $t = 0.8$
with $500 \times 500$ grid points.}
	}
\end{figure*}

\begin{example}
{\rm (Two-dimensional Rayleigh-Taylor instability)
This model problem describes the interface instability between fluids with different
densities, where the heavy fluid moves down to the light fluid.
This problem has been computed to check the numerical dissipation
(e.g., \cite{Shi, Xu}).
In this example,
the simulation is performed on the domain $[0,0.25] \times [0,1]$
with the initial condition specified by	
\begin{equation*}
(\rho, u , v,  p) =
\begin{cases}
(2, 0, -0.025 \displaystyle\sqrt{\frac{5 p}{3 \rho}} \cos(8 \pi x), 2y+1) & \text{for $ y \in [0, 0.5)$}, \\
(1, 0, -0.025 \displaystyle\sqrt{\frac{5 p}{3 \rho}} \cos(8 \pi x), 2y+ 1.5) & \text{for $ y \in [0.5, 1]$}.
\end{cases}
\end{equation*}
The gravitational effect can be obtained by adding $ \rho $ and $ \rho v $
to the right of $y$-momentum and the energy equation respectively.
We set the ratio of specific heats as $\gamma = 5/3$.
The right and left-hand  boundaries are taken by the reflective boundary
conditions.  The velocity is 0, and we set
$(\rho, p) = (1, 2.5)$ for the top boundary condition
and $(\rho, p) = (2, 1)$ for the bottom boundary condition.
The results are simulated up to time $t=1.95$.
Fig. \ref{rti_2} depicts the density contour lines of the
solutions computed by the WENO-H and other
fifth-order WENO schemes with $120 \times  480$ grid points.
The appearance of the small structure in the flow is a measure
of the small magnitude of the intrinsic numerical viscosity of
the numerical schemes.
We can observe that the WENO-H scheme is able to capture complex structures
better than other schemes and improves significantly the contact discontinuity
resolution.

}
\end{example}

\begin{figure*}[t!]
\begin{center}
\includegraphics[trim=40 130 30 0,clip,  width=0.90\textwidth]
{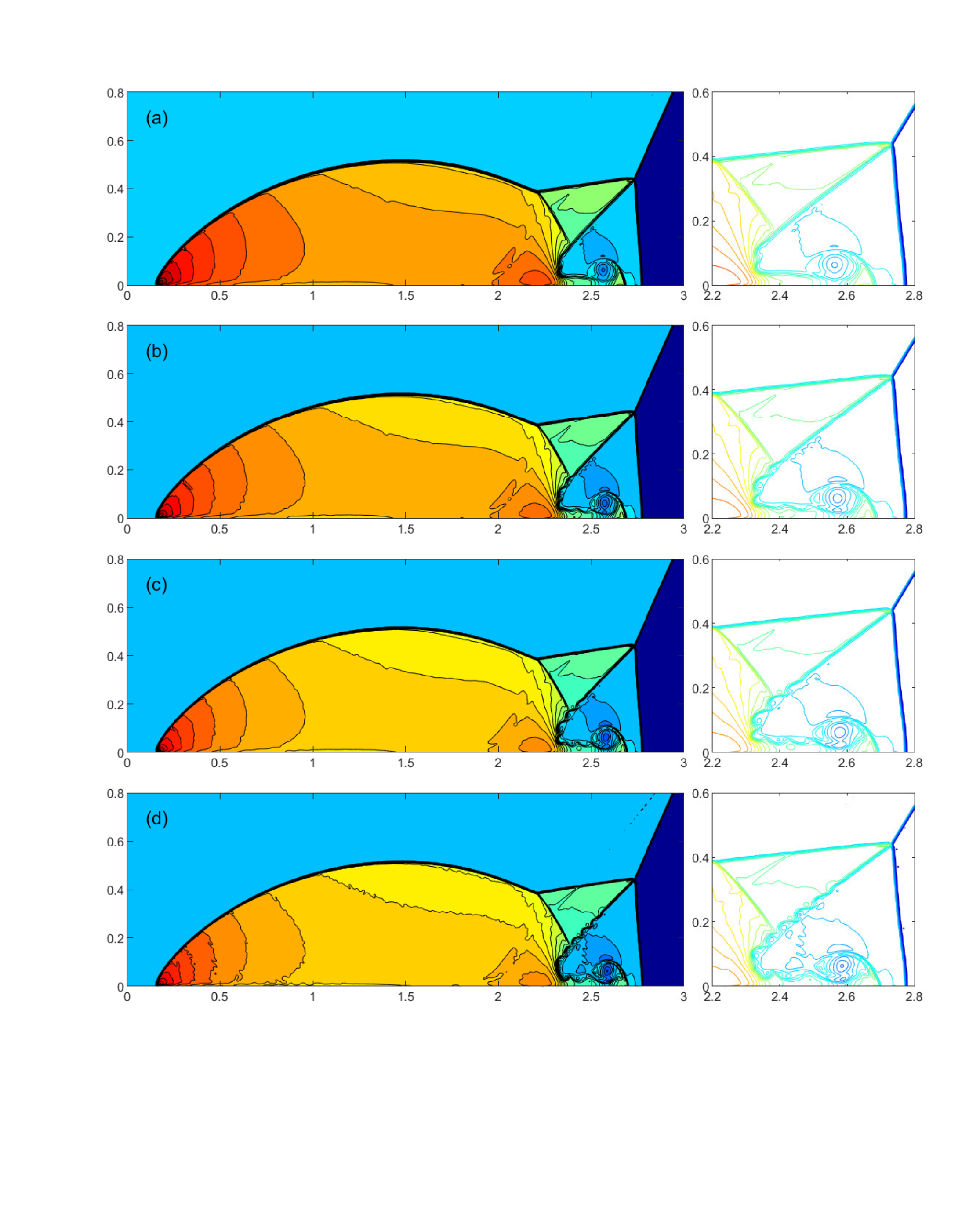}
\end{center}
\caption{\label{double_mach_z1} {Double Mach reflection of a strong
shock \cite{wood}: (a)  WENO-JS, (b) WENO-M, (c) WENO-Z, (d) WENO-H
at $t = 0.2$ with $960 \times 240$.}
}
\end{figure*}

\begin{figure*}[t!]
\begin{center}
\includegraphics[trim=30 100 30 0,clip, width=0.95\textwidth]{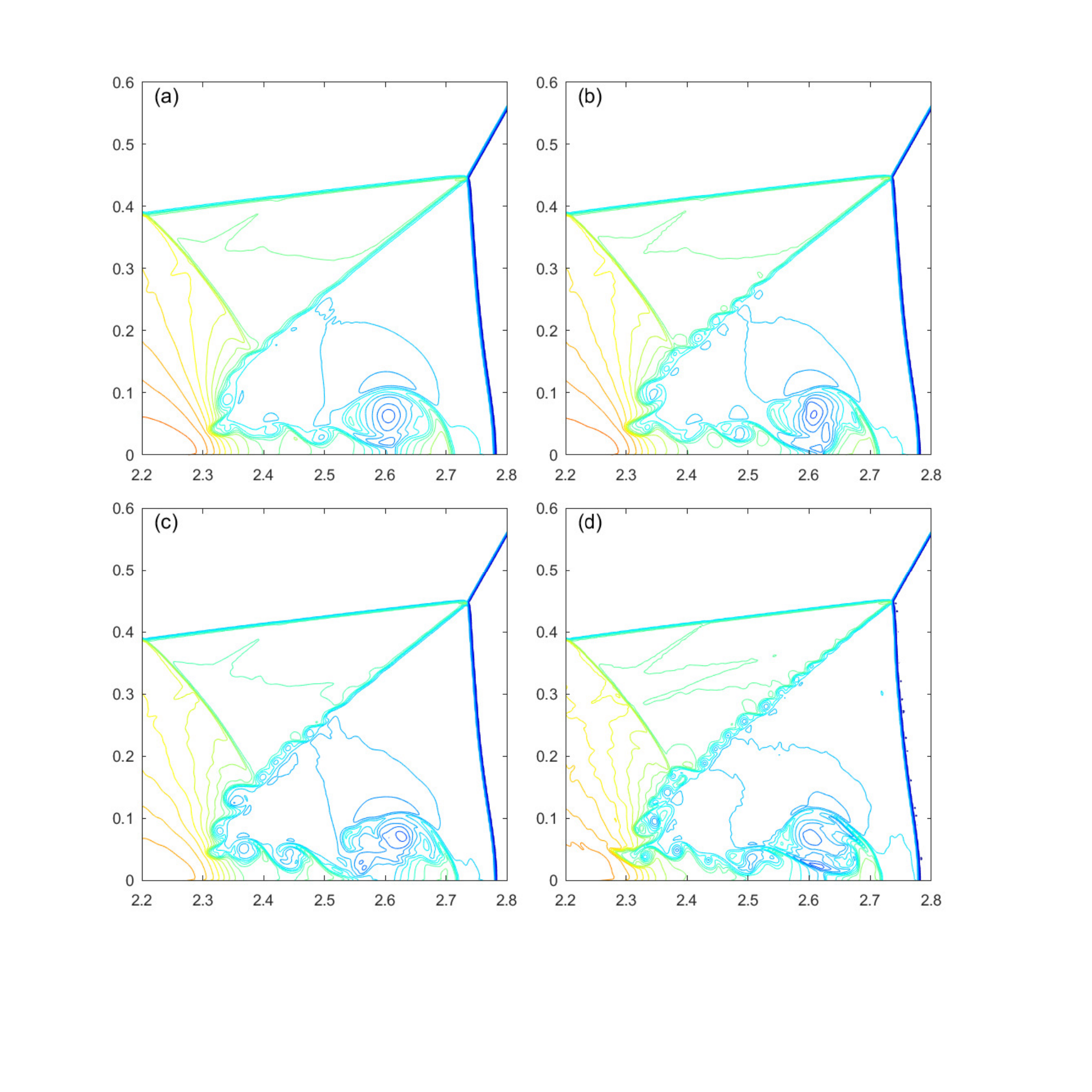}
\end{center}
\caption{\label{double_mach_z2} Double Mach reflection of a strong
shock \cite{wood}: (a) WENO-JS, (b) WENO-M, (c) WENO-Z, and (d) WENO-H
at $t = 0.2$ with $1920 \times 480$ grid points.
}
\end{figure*}

\begin{example}
{\rm (Two-Dimensional Riemann Problem for Gas Dynamics)
We consider the third configuration of the two-dimensional
Riemann problems for gas dynamics \cite{car}.
The computational domain is $[0,1] \times [0,1]$
which is divided into $4$ quadrants by lines $x=0.8$ and $y=0.8$.
In each quadrant, the initial data is set as constant:
\begin{equation*}
(\rho, u , v,  p) =
\begin{cases}
(1.5, 0, 0, 1.5) & \text{for $ (x,y) \in [0.8, 1] \times [0.8, 1]$}, \\
(0.5323, 1.206, 0, 0.3)& \text{for $ (x,y) \in [0, 0.8] \times [0.8, 1]$}, \\
(0.138, 1.206, 1.206, 0.029) &\text{for $(x,y)\in [0, 0.8]\times [0, 0.8]$},\\
(0.5323, 0, 1.206, 0.3) & \text{for $ (x,y) \in [0.8, 1] \times [0, 0.8]$}
\end{cases}
\end{equation*}
\noindent
with outflow boundary conditions.
The computation is carried out until time $t=0.8$ with $500 \times 500$
grid points.
The performance of WENO-H is compared with those of other
WENO schemes in Figs. \ref{quad2_2}.


%
}
\end{example}


\begin{figure*}[t!]
\centering
\includegraphics[width=0.85\textwidth]{./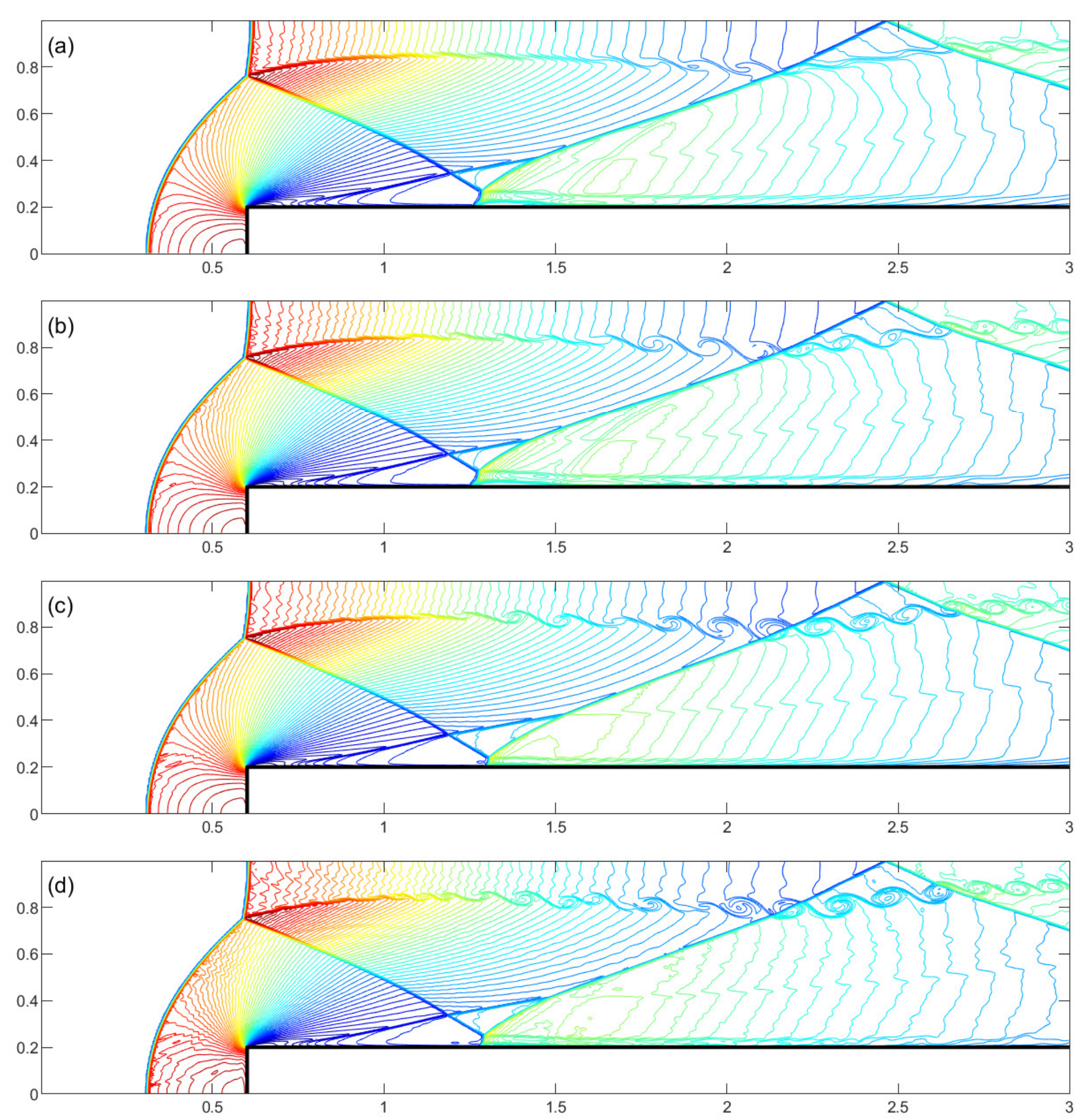}\hskip2cm
\caption{\label{wind_tunnel} Density profiles of Mach 3 Wind tunnel
with a step \cite{wood}:
(a) WENO-JS, (b) WENO-M, (c) WENO-Z, and (d) WENO-H
at $t=4$ with $768\times 256$ grid points.  }
\label{fig:wind_tunnel}
\end{figure*}

\begin{example}
{\rm (Double Mach reflection of a strong shock)
This model problem was introduced by Woodward and Colella \cite{wood}.
Since then, it has been used to test the capability of
a high accurate scheme to capture small-scale structures and shocks.
We test this problem on the domain $[0, 4] \times [0, 1]$.
This example is initialized with a right-moving Mach 10 oblique shock
oriented at an angle of $60^{\circ}$ to the horizontal axis passing
through the point $(x,y)=(\frac{1}{6},0)$.
Exact post-shock condition is used for the boundary conditions on
$x \in [0, \frac{1}{6}]$ and the rest part of the bottom is used
as a reflective boundary condition.
Left and right boundaries use inflow and outflow boundary conditions.
Exact motions of the Mach $10$ shock are used to the boundary of top parts.
Density $\rho = 1.4$ and pressure $p=1$ are set for the unshocked fluid.
The problem was run till $t = 0.2$.
Fig. \ref{double_mach_z1} and \ref{double_mach_z2}
plot the  density profiles computed with the WENO-H and WENO-JS, WENO-M and
WENO-Z schemes with $960 \times 240$ and $1920 \times 480 $ grid
points respectively.
We can see that the WENO-H scheme yields better resolutions
than other WENO methods.
}
\end{example}

\begin{figure*}[h!]
\begin{center} 
\includegraphics[width=0.8\textwidth]{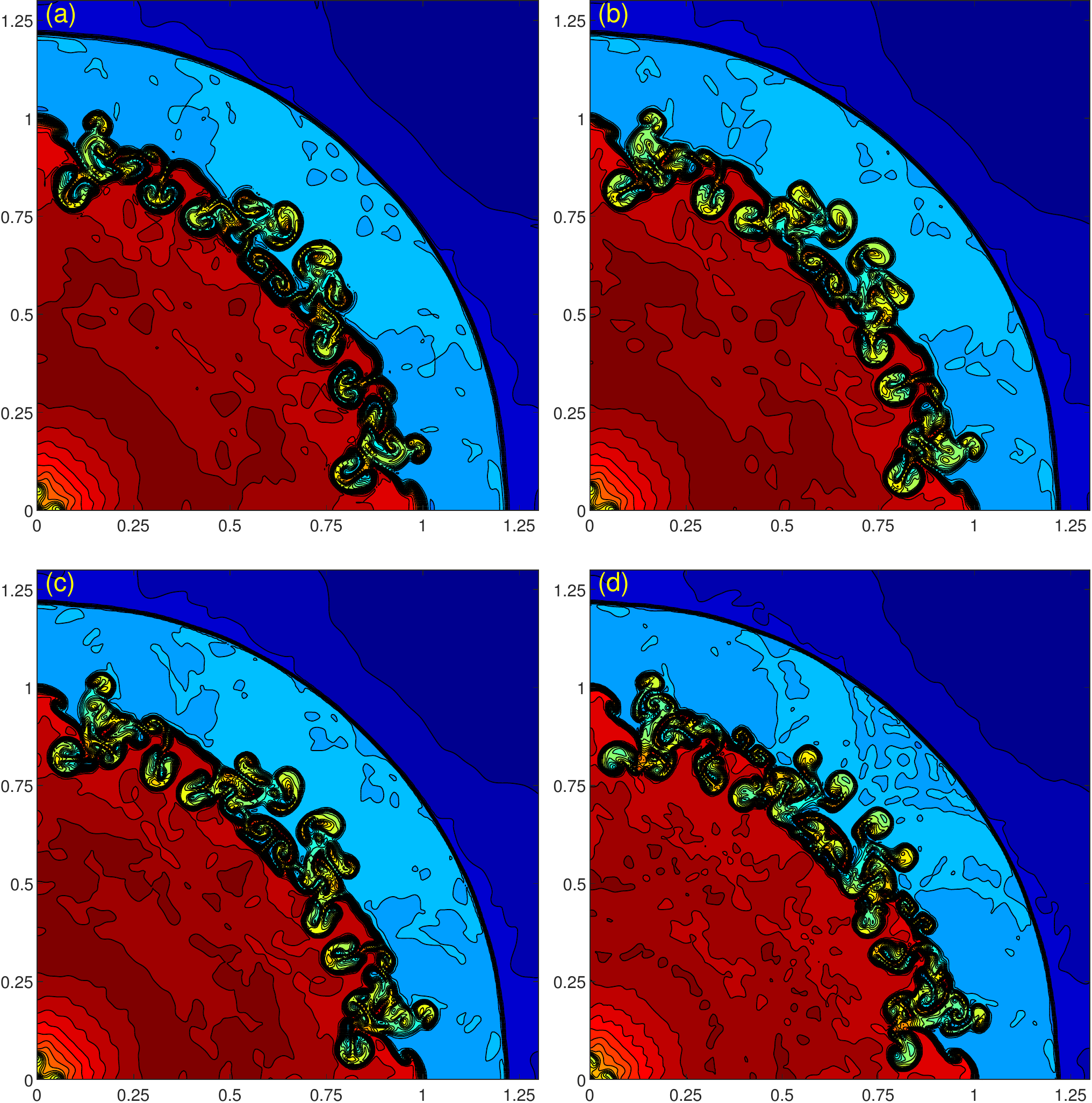}
\end{center} \caption{\label{expl2} Density profiles of the first quadrant for
Explosion \cite{toro1}, (a) WENO-JS, (b) WENO-M, (c) WENO-Z, and (d) WENO-H
at $t=3.2$. }
\end{figure*}

\begin{example}
{\rm (A Mach 3 Wind Tunnel with a Step)	
This problem describes a Mach 3 flow with a forward-facing step
in a wind tunnel. It was first described by Emery \cite{Emery}
to compare several hydrodynamical methods.
Later, Woodward and Colella \cite{wood} used it to compare
several advanced numerical schemes.
We compute this problem in a wind tunnel with
one length unit width and three length units long.
The step is $0.2$ length units high and is located
$0.6$ length units from the left-hand end of the tunnel.
The reflective boundary conditions is assumed along the walls of the tunnel.
We also assume that the tunnel has an infinite width along
the direction orthogonal
to the calculation plane.
A gas is continuously supplied at the left boundary
with the pressure $1$, density $1$  and velocity $3$ respectively.
The corner of the step is the singularity of the flow, since it is the center point of the rarefaction fan.
After the bow shock is reflected in the step, the shock gradually reaches the top reflective wall of the tunnel around $ t=.65$.
Due to the reflections and interactions of the shocks, a triple point is formed,
from which the trail of vortices moves towards the right boundary.
Fig. \ref{wind_tunnel} plots the density profiles obtained by
WENO-H with the other WENO schemes
at the final time $t=4$ with $768 \times 256$ mesh grids.
We see that the roll-up of the vortex sheet is more clearly visible
with WENO-H.
}
\end{example}		

\begin{example}
{\rm (Explosion) We compute the explosion problem proposed in \cite{toro1}
(see also \cite{Liska}) which is a circularly symmetric two-dimensional
problem with initial circular region of higher density and pressure.
The circle is centered at the origin with radius 0.4.
The computation is performed on the domain $[-1.5, 1.5] \times [-1.5, 1.5]$
with the initial condition given by
\begin{equation*}
(\rho, u , v,  p) =
\begin{cases}
(1.000, 0, 0 , 1.0) & \text{if $x^2 + y^2 < 0.16$}, \\
(0.125, 0, 0 ,0.1)& \text{otherwise}
\end{cases}
\end{equation*}
with $\gamma = 1.4$.
We compute the solution until time $t=3.2$ with $600 \times 600$ mesh grids.
Fig. \ref{expl2} shows the density profiles obtained
by the four tested WENO schemes.
We can see that the numerical results by WENO-H are much `curlier'
at the contact surface than the results obtained by other tested methods.
This explains that WENO-H has substantially smaller dissipation
than other WENO schemes.
	}
\end{example}

\section{Conclusion}
In this paper, we have proposed an improved WENO schemes (called WENO-H)
for the numerical solution of the hyperbolic conservation laws.
The interpolation method is based on the space of exponential polynomials
with a tension parameter.  We proposed a practical approach
to determine the parameter of the exponential approximation
space  by taking into account the local data feature.
As a result, the proposed WENO scheme attains an improved order
of accuracy (that is, sixth-order) better than other fifth-order WENO methods
without loss of accuracy at critical points.
A detailed analysis is provided to verify the improved accuracy.
Further, modified nonlinear weights based on $L^1$-norm approach
were proposed along with a new global smoothness indicator.
The proposed WENO scheme resolve discontinuities sharply while
reducing numerical dissipation significantly.
Several experimental results of the WENO-H scheme
for the advection equation and the system of the Euler equations
are compared with those of the other fifth-order WENO scheme
to confirm the reliability of the method.
In the near future we generalized our approach to sixth or higher-order
WENO schemes.


%

\begin{thebibliography}{99}
%
\bibitem{ABC} F. Acker, R. B. de R. Borges and B. Costa,
An improved WENO-Z scheme
{\em J. Comput. Phys.} 313, 726-753 (2016).

\bibitem{bal16} D.S. Balsara, S. Garain, C.-W. Shu,
An efficient class of WENO schemes with adaptive order,
{\em J. Comput. Phys.} 326, 780–804  (2016).

\bibitem{bal} D.S. Balsara and C.W. Shu,
Monotonicity prserving WENO schemes with increasingly high-order of
accuracy, {\em J. Comput. Phys.} 160, 405--452 (2000).

\bibitem{bor} R. Borges, M. Carmona, B. Costa, and W.S. Don,
An improved WENO scheme for hyperbolic conservation laws, {\em J.
	Comput. Phys.} 227, 3191--3211 (2008).

\bibitem{Cravero} I. Cravero, M. Semplice,
On the accuracy of WENO and CWENO reconstructions of third order
on nonuniform meshes,
{\em J. Sci. Comput.} 67(3), 1219–1246 (2016).

\bibitem{Chen} L. L. Chen, C. Huang,
An improved WLS-WENO method for solving hyperbolic conservation laws,
{\em J. Comput. Phys.} 392, 96--114 (2019).


\bibitem{Emery} A. F. Emery,
An Evaluation of several differencing methods for inviscid fluid flow problems,
{\em J. Comput. Phys.} 2, 306--331 (1968).

\bibitem{GL}
{\sc G. H. Golub and C. F. Van Loan},
{\em  Matrix Computations},
John Hopkins University Press. Baltimore, (1996).

\bibitem{GMR} S. Gottlieb, J. S. Mullen, and S.J. Ruuth,
A Fifth Order Flux Implicit WENO Method,
{\em J. Sci. Comput.} 27 (1-3), 271-287 (2006).


\bibitem{gerolymos} G.A. Gerolymos, D.S$\acute{\textrm{e}}$n$\acute{\textrm{e}}$chal, and I. Vallet,
Very-high-order WENO schemes, {\em J. Comput. Phys.} 228, 8481--8524
(2009).

\bibitem{HLY} Y. Ha, Y.J. Lee and J. Yoon,
Modified essentially non-oscillatory scheme based on exponential polynomial interpolation for hyperbolic conservation laws,
{\em SIAM J. Numer. Analy.} 52 (2), 864-893 (2013).

\bibitem{ha1} Y. Ha, C. H. Kim, Y. J. Lee, and J. Yoon,
An improved weighted essentially non-oscillatory scheme with a new smoothness indicator,
{\em J. Comput. Phys.}, 232, 68--86(2013).


\bibitem{HKYY} Y. Ha, C. H. Kim, Y. H. Yang, and J. Yoon,
Sixth-order weighted essentially non-oscillatory schemes based on exponential polynomials,
{\em SIAM J. Sci. Comput.}, Vol. 38, No 4, A1987--A2017(2016).


\bibitem{Har0} A. Harten,
High resolution schemes for hyperbolic conservation laws,
{\em J. Comput. Phys.}, 49, 357--393(1983).

\bibitem{Har2} A. Harten,
On a Class of High Resolution Total-Variation-Stable
Finite-Difference Schemes, {\em SIAM J. Numer. Anal.}, Vol. 21, no.
1, 1--23 (1984).

\bibitem{Har_osh2} A. Harten, S. Osher,
Uniformly High-order Accurate Non-oscillatory Schemes,
{\em IMRC Technical Summary Rept. 2823},
Univ. of Wisconsin, Madison, WI, May 1985.


\bibitem{Har4} A. Harten and S. Osher,
Uniformly High-Order accurate Non-Oscillatory schemes I.
{\em SIAM	J. Numer. Anal.} Vol. 24, No. 2, 279--309 (1987).

\bibitem{Har5} A. Harten, B. Engquist, S. Osher, and S. Chakravarthy,
Uniformly High-Order accurate Non-Oscillatory schemes III.
{\em J.	Comput. Phys.}  131, 3--47(1997). 

\bibitem{hen} A.K. Henrick, T.D. Aslam, and J.M. Powers,
Mapped weighted-essentially-non-oscillatory schemes : achieving
optimal order near critical points, {\em J. Comput. Phys.} 207,
542--567 (2005).

\bibitem{HU1} X. Y. Hu Q. Wang, and N. A. Adams,
An adapive central-upwind weighted essentially non-oscillatory scheme,
{\em J. Comput. Phys.}, 229, 8952--8965 (2010).

\bibitem{HU2} X. Y. Hu, and N. A. Adams,
Scale separation for implicit large eddy simulation,
{\em J. Comput. Phys.}, 230, 7240--7249 (2011).

\bibitem{Jiang1} G. Jiang and C.W. Shu,
Efficient implementation of weighted ENO schemes,
{\em J. Comput. Phys.} 126, 202--228 (1996).


\bibitem{KS} S. Karlin and W.J. Studden,
{\it Tchebycheff Systems: With Applications in Analysis and
	Statistics}, Interscience Publishers, New York, 1966.

\bibitem{Kaser} M. K$\ddot{a}$ser, A. Iske,
ADER schemes on adaptive triangular meshes for scalar conservation laws,
{\em J. Comput. Phys.} 205, 486--508 (2005).

\bibitem{KHY} C. H. Kim, Y. Ha, and J. Yoon,
Modified nonlinear  weights for fifth-order weighted
essentially non-oscillatory schemes,
{\em J. Sci. Comput.}, 67, 299-323 (2016).



\bibitem{lax1} P.D. Lax,
Weak solutions of Nonlinear Hyperbolic Equations and their Numerical
Computation {\em Commun. Pure Appl. Math.} {7}, 159 --193 (1954).



\bibitem{Levy99} D. Levy, G. Puppo, G. Russo,
Central WENO schemes for hyperbolic systems of conservation laws,
{\em M2AN. Math. Model. Numer. Anal.} 33, 547--571 (1999).


\bibitem{Levy} D. Levy, G. Puppo, G. Russo,
Compact central WENO schemes for multidimensional conservation laws,
{\em SIAM J. Sci. Comput.} 22, 656–672 (2000).


\bibitem{Liska} R. Liska and B. Wendroff,
Comparison of several difference schemes on 1D and 2D test problems for the Euler equations, {\em SIAM J. Sci. Comput.}, 25, 995--1017 (2004).


\bibitem{L2} X.-D. Liu, S. Osher, and T. Chan,
Weighted essentially non-oscillatory schemes, {\em J. Comput. Phys.}
115, 200--212 (1994).

\bibitem{Liu_wls} H.X. Liu, X.M. Jiao, WLS-ENO: weighted-least-squares based essentially non-oscillatory schemes for finite volume methods on unstructured meshes, {\em J. Comput. Phys.} 314 749--773 (2016).

\bibitem{Pirozzoli}  S. Pirozzoli,
Conservative hybrid compact-WENO schemes for shock–turbulence interaction,
{\em J. Comput. Phys.} 178, 81–117 (2002).



\bibitem{car} C.W. Schulz-Rinne, J.P. Collins, and H.M. Glaz,
Numerical Solution of the Riemann Problem for Two-Dimensional Gas
Dynamics, {\em SIAM J. Sci. Comput.}, Vol. 14, No. 6, 1394--1414
(1993).

\bibitem{Shi} J. Shi, Y.T. Zhang, C.W. Shu,
Resolution of high order WENO schemes for complicated flow structures
{\em J. Comput. Phys.} 186, 690--696 (2003).




\bibitem{shu01} C.W. Shu,
Essentially non-oscillatory and weighted essentially non-oscillatory schemes
for hyperbolic conservation laws, in
{\em Advanced Numerical Approximation of Nonlinear Hyperbolic Equations},
(Editor: A. Quarteroni), Lecture Notes in Mathematics, vol 1697,
Springer-Verlag, Berlin/New York, 1998, 325-432.

\bibitem{shu1} C.W. Shu and S. Osher,
Efficient implementation of essentially non-oscillatory shock
capturing schemes, {\em J. Comput. Phys.} 77, 439--471 (1988).

\bibitem{shu2} C.W. Shu and S. Osher,
Efficient implementation of essentially non-oscillatory shock
capturing schemes II, {\em J. Comput. Phys.} 83, 32--78 (1989).

\bibitem{shu3} C.W. Shu,
ENO and WENO schemes for hyperbolic conservation laws, in: B.
Cockburn, C. Johnson, C.W. Shu, E. Tadmor (Eds.), Advanced Numerical
Approximation of Nonlinear Hyperbolic Equations, Lecture Notes in
Mathematics, vol. 1697, Springer, Berlin, 1998, pp. 325--432 (also
NASA CR- 97-206253 and ICASE-97-65 Rep., NASA Langley Research
Center, Hampton [VA, USA]).


\bibitem{Shen} Y. Q. Shen, G.W. Yang,
Hybrid finite compact-WENO schemes for shock calculation,
{\em Int. J. Numer. Methods Fluids} 53, 531–560 (2007).


\bibitem{sod} G. Sod,
A Survey of Several Finite Difference Methods for Systems of
Nonlinear Hyperbolic Conservation Laws,
{\em J. Comput. Phys.} 27, 1--31 (1978).


\bibitem{toro1} E.F. Toro,
Riemann Solvers and Numerical Methods for Fluid Dynamics,
{\em Springer-Verlag}, New York, 1997.

\bibitem{T-TORO} V. A. Titarev and E. F. Toro,
Finite volume WENO schemes for three-dimensional conservation laws,
J. Comput. Phys. 201 (2014) 238-260.

\bibitem{wood} P. Woodward and P. Colella,
The Numerical Simulation of Two-Dimensional Fluid Flow with Strong Shocks,
{\em J. Comput. Phys.} 54, 115--173 (1984).

\bibitem{Xu} Z.F. Xu, C.W. Shu,
Anti-diffusive flux corrections for high order finite difference WENO schemes,
{\em J. Comput. Phys.} 205, 458--485 (2005).

\bibitem{YuS} L. Yuan and C.W. Shu,
Discontinuous Galerkin method based on non-polynomial approximation spaces,
{\em J. Comput. Phys.} 218, 295--323 (2006).


\bibitem{Zhang} R. Zhang, M. Zhang, C.-W. Shu,
On the order of accuracy and numerical performance of two classes
of finite volume WENO schemes,
{\em Commun. Comput. Phys.} 5, 836–848 (2009)

\bibitem{ZQ1} J. Zhu  and J. Qiu,
Trigonometric WENO schemes for hyperbolic conservation laws and highly
oscillatory problems,
{\em Commun. Comput. Phys.} 8, 1242--1263 (2010).

\bibitem{ZQ2} J. Zhu and J. Qiu,
WENO schemes and their application as limiters for RKDG methods based on Trigonometric approximation spaces,
{\em J. Sci. Comput.} 55, 606--644 (2013).

\bibitem{Zhu1} J. Zhu, J.X. Qiu,
A new fifth order finite difference WENO scheme for solving hyperbolic conservation laws,
{\em J. Comput. Phys.} 318, 110--121 (2016).

\bibitem{Zhu2} J. Zhu, J.X. Qiu,
A new type of finite volume WENO schemes for hyperbolic conservation laws,
{\em J. Sci. Comput. } 73, 1338--1359  (2017).

\end{thebibliography}
\end{document}